\documentclass[a4paper,10pt]{amsart}
\usepackage{amsmath,amsthm,amsfonts,amssymb,calrsfs}

\usepackage[all]{xy}
\usepackage{tikz-cd, tikz}

\usepackage{graphicx}
\usepackage{yfonts}
\usepackage{xcolor}

\usepackage{array}
\usepackage{multirow}
\usepackage{caption}

\usepackage{todonotes}

\usepackage[hyperindex=true, colorlinks=true, linkcolor=blue, filecolor=black,
citecolor=blue, urlcolor=blue,
pagebackref=true,
bookmarks=true, bookmarksopen=true, bookmarksnumbered=true,
bookmarksopenlevel=2, pdfborder={0 0 0}]{hyperref}
\usepackage{cleveref}

\theoremstyle{plain}{
    \newtheorem{theorem}{Theorem}[section]
    \newtheorem{lemma}[theorem]{Lemma}
    \newtheorem{corollary}[theorem]{Corollary}
    \newtheorem{proposition}[theorem]{Proposition}
    
}

\theoremstyle{definition}{
    \newtheorem{definition}[theorem]{Definition}
    \newtheorem{defn-thm}[theorem]{Definition-Theorem}

}

\theoremstyle{remark}{
    \newtheorem{remark}[theorem]{Remark}

}

\newcommand\Ima{\mathrm{Im}}

\newcommand\cC{\mathcal{C}}

\newcommand\cF{\mathcal{F}}
\newcommand\cE{\mathcal{E}}
\newcommand\cO{\mathcal{O}}

\newcommand\cG{\mathcal{G}}
\newcommand\cH{\mathcal{H}}

\newcommand\cK{\mathcal{K}}

\newcommand\cL{\mathcal{L}}

\newcommand\cI{\mathcal{I}}
\newcommand\cQ{\mathcal{Q}}

\DeclareFontFamily{U}{mathc}{}
\DeclareFontShape{U}{mathc}{m}{it}%
{<->s*[1.03] mathc10}{}
\DeclareMathAlphabet{\mathcal}{U}{mathc}{m}{it}

\DeclareMathOperator{\sHom}{\mathcal{H\mkern-3mu om}}

\newcommand\del{\partial}

\newcommand\K{\mathbb{K}}
\newcommand\R{\mathbb{R}}

\newcommand\Z{\mathbb{Z}}
\newcommand\N{\mathbb{N}}

\newcommand\E{\mathbb{E}}
\newcommand\LL{\mathbb{L}}

\newcommand\Hom{\mathrm{Hom}}
\newcommand\Spec{\mathrm{Spec}}

\newcommand\codim{\mathrm{codim}}

\newcommand\Supp{\mathrm{Supp}}

\newcommand\qCoh{\mathrm{qCoh}}
\newcommand\Coh{\mathrm{Coh}}                              

\begin{document}

\title[Decomposition of toric sheaves]{Decomposition of toric sheaves}

\address{Univ Brest, UMR CNRS 6205, Laboratoire de Mathématiques de Bretagne
Atlantique, France}

\author[C. Tipler]{Carl TIPLER}
\email{carl.tipler@univ-brest.fr}


\begin{abstract}
We show that the abelian category of coherent toric sheaves is generated, as a Serre subcategory, by pushforwards of rank $1$ reflexive sheaves supported on invariant subvarieties. As an application, we extend previous results by Klyachko on cohomology of toric vector bundles to coherent toric sheaves.
\end{abstract}

\maketitle

\section{Introduction}
\label{sec:intro}
Let $X$ be a normal toric variety with torus $T$.  Invariants of coherent sheaves over $X$  are often defined and computed by mean of resolutions by suitable sheaves of $\cO_X$-modules, typically finite locally free resolutions. In the smooth case, Klyachko constructed a resolution
\[
0\to \cK_n\to\ldots\to\cK_1\to\cK_0\to \cE\to 0
\]
for any locally free toric sheaf $\cE$ over $X$, where
each $\cK_i$ is equivariant and splits as a direct sum of line bundles \cite{Kly90} (a sheaf is called toric if it admits a $T$-equivariant structure, see Section \ref{sec:toric sheaves}). This resolution was then used to provide toric descriptions of cohomology groups and Chern classes for toric vector bundles. However, such locally free resolutions do not exist in full generality \cite{Payne09}.

In this paper, we undertake a different approach  by mean of a decomposition theorem. Denote by $\Coh^T(X)$ the abelian category of coherent toric sheaves. Recall that a Serre subcategory is a full and non-empty subcategory closed under subobjects, quotients, and extensions. We will say that an object $\cE\in\Coh^T(X)$ is \textit{elementary} if it is pure, with reduced and irreducible support, and restricts to a rank one reflexive sheaf on its support. Equivalently, there exists an invariant subvariety $\iota:V\to X$, a rank one reflexive sheaf $\cL$ on $V$ and an equivariant structure on $\iota_*\cL$ such that $\cE\simeq\iota_*\cL$.
\begin{theorem}
 \label{theointro:decomposition theorem}
 Let $X$ be a normal toric variety. The Serre subcategory of $\Coh^T(X)$ generated by elementary toric sheaves is $\Coh^T(X)$ itself.
\end{theorem}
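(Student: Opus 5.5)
We argue by Noetherian induction on the support, combined with induction on the rank along the generic point of the support. Let $\cC\subset\Coh^T(X)$ denote the Serre subcategory generated by elementary toric sheaves. Since $\cC$ is closed under extensions, it suffices to show that every $\cE\in\Coh^T(X)$ has a finite filtration by equivariant subsheaves whose graded pieces lie in $\cC$. The first reduction uses the torsion filtration. For a coherent equivariant sheaf, the maximal subsheaf supported in dimension $\le d$ is $T$-invariant, because it is canonical. Hence $\cE$ is filtered by equivariant subsheaves whose successive quotients are pure. Each pure piece $\cF$ of dimension $d$ has support a union of $T$-invariant closed subsets, hence a union of orbit closures $V_1,\dots,V_k$ of dimension $d$. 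Taking the subsheaf of sections supported on $V_1$ and quotienting, which is again canonical and therefore equivariant, we reduce to a pure $\cF$ whose support is set-theoretically a single invariant subvariety $V=\overline{O(\sigma)}$.

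Next we reduce to scheme-theoretic support $V$. Let $\cI_V$ be the (invariant) ideal of $V$. The filtration $\cF\supset\cI_V\cF\supset\cI_V^2\cF\supset\cdots$ consists of equivariant subsheaves and terminates, since $\cI_V^N\cF=0$ for $N\gg0$. The graded pieces are $\cO_V$-modules, possibly no longer pure, but they have support contained in $V$. So we set up the whole argument as an induction on $\dim\Supp$: assume every toric sheaf with support of dimension $<d$ lies in $\cC$. Then, modulo sheaves with smaller support, which lie in $\cC$ by induction, we may assume $\cF=\iota_*\cG$ with $\cG$ a torsion-free equivariant coherent sheaf on the normal toric variety $V$. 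Its reflexive hull $\cG^{\vee\vee}$ is equivariant, and the cokernel of $\cG\to\cG^{\vee\vee}$ is supported in codimension $\ge1$ in $V$. Hence it suffices to treat reflexive equivariant $\cG$ on $V$, and we proceed by induction on $\mathrm{rank}\,\cG$.

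The key step, and the main obstacle, is to produce for a reflexive equivariant $\cG$ of rank $r\ge1$ on $V$ an equivariant rank one reflexive subsheaf $\cL\subset\cG$ such that the quotient is torsion-free up to lower-dimensional pieces. For this we would use Klyachko-type data. Restricted to the open torus $T_V\subset V$, $\cG$ becomes an equivariant vector bundle, i.e.\ a $T_V$-module $E=\bigoplus_m E_m$ free over $\K[M]$. A homogeneous generic section spans an equivariant line subbundle $T_V\times\K\chi^m\subset\cG|_{T_V}$. Taking its saturation $\cL=\cG\cap j_*(\cO_{T_V}\chi^m)$ (with $j:T_V\to V$) gives an equivariant subsheaf of $\cG$ that is reflexive of rank one, since it is the intersection of reflexive sheaves, with torsion-free quotient $\cG/\cL$ by construction. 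Equivariance of the saturation follows because both $\cG$ and the line in $j_*j^*\cG$ are $T$-stable. The delicate points are checking that the chosen line on the torus is genuinely $T$-stable (choose an eigenvector of weight $m$ in the fiber at the identity, using that the torus action on the generic fiber is diagonalizable, possibly after noting the equivariant structure trivializes $\cG|_{T_V}\simeq \cO_{T_V}\otimes E_0$) and that the saturation is coherent; the latter holds as a subsheaf of the coherent sheaf $\cG$.

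Then $\cG/\cL$ is torsion-free of rank $r-1$. Its reflexive hull differs from it by a sheaf with smaller-dimensional support, which lies in $\cC$ by the dimension induction. The hull itself lies in $\cC$ by the rank induction, and $\iota_*\cL$ is elementary. Closing under extensions gives $\iota_*\cG\in\cC$, which completes both inductions. The base case of dimension $0$ consists of fixed points with $\cG$ a finite-dimensional $T$-module, which splits into characters, each elementary.
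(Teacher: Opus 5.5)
Your proposal is correct and follows essentially the paper's route. It runs through the torsion filtration, splits off one irreducible component via sections supported on it, passes to reduced support through the ideal of $V$, replaces by the reflexive hull, and inducts on the rank by splitting off a saturated rank one equivariant subsheaf; your saturation $\cG\cap j_*(\cO_{T_V}\chi^m)$ of an equivariant line subbundle on the open orbit is just the geometric form of the paper's multifiltration $(L\cap E^\sigma_m)$ for a line $L\subset E$. Treating the torsion-free case as the top-dimensional piece $V=X$, and absorbing non-pure graded pieces into the dimension induction, only makes explicit what the paper handles in separate steps.
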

The proof of this result is quite simple, and constructive, see Section \ref{sec:decomposition}. In practice, a useful application of Theorem \ref{theointro:decomposition theorem} is the following. We will say that a property $P$ for objects in $\Coh^T(X)$ is \textit{preserved in short exact sequences} if for any short exact sequence of coherent toric sheaves
\[
0\to \cE\to\cF\to\cG\to 0,
\]
if $P$ is satisfied by two of the sheaves $\cE$, $\cF$ or $\cG$, it is satisfied by the third one.
\begin{corollary}
\label{corintro:decomposition theorem}
Let $X$ be a normal toric variety.
 Assume that a property $P$ for objects in $\Coh^T(X)$ is satisfied by elementary toric sheaves and is preserved in short exact sequences. Then $P$ is satisfied by all coherent toric sheaves on $X$.
\end{corollary}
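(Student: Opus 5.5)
The corollary follows directly from Theorem \ref{theointro:decomposition theorem}. Consider the full subcategory $\cC_P \subset \Coh^T(X)$ whose objects are the coherent toric sheaves satisfying $P$. I will show that $\cC_P$ contains a Serre subcategory containing all elementary toric sheaves. The theorem then forces $\cC_P=\Coh^T(X)$.

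The subtlety is that "preserved in short exact sequences" gives two-out-of-three. That is closure under extensions, kernels of epimorphisms and cokernels of monomorphisms. It does not give closure under arbitrary subobjects and quotients, so $\cC_P$ itself need not be a Serre subcategory. I therefore expect to use the constructive content of the proof of Theorem \ref{theointro:decomposition theorem} in Section \ref{sec:decomposition}, not just its abstract statement.

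\textbf{Step 1.} Verify that $P$ holds for the zero sheaf. Apply two-out-of-three to $0\to\cE\to\cE\to 0\to 0$ with $\cE$ elementary. More generally, $P$ is invariant under isomorphism via $0\to\cE\to\cF\to 0\to 0$.

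\textbf{Step 2.} Revisit the proof of the theorem. The natural way to prove it, which I expect Section \ref{sec:decomposition} to follow, is by noetherian induction on the support combined with induction on the rank along the generic point of each component. Given $\cF\in\Coh^T(X)$, take its torsion filtration by dimension of support, which is equivariant. This expresses $\cF$ as an iterated extension of pure toric sheaves. For a pure sheaf $\cG$ with support $V$, first split off the irreducible components equivariantly. Then restrict to the reduced support, filtering by powers of the ideal of $V$ to reduce to sheaves that are $\cO_V$-modules. For a torsion-free toric sheaf on the normal toric variety $V$, pick an equivariant rank one reflexive subsheaf $\cL\subset\cG$, e.g. a saturated one coming from a $T$-eigen-section at the generic point. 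The quotient $\cG/\cL$ then has smaller rank, or is torsion with smaller-dimensional support.

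Each such step produces a short exact sequence in which two terms are known to satisfy $P$ by induction, or are elementary. Hence the third term satisfies $P$, and $P$ propagates up through the induction in the same order as the decomposition.

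\textbf{Main obstacle.} The main obstacle is ensuring that every step of the decomposition is a genuine short exact sequence in which the already-handled objects are two of the three terms. One must avoid steps that pass to arbitrary subquotients. The delicate spots are these: the cokernel $\cG/\cL$ may acquire torsion, which is handled by the dimension induction; and the sheaf may be non-reduced along $V$, which is handled by the finite filtration $\cI_V^k\cG$, all of whose terms are equivariant.

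If the paper's proof is instead phrased abstractly, I would fall back on the following description. The Serre subcategory generated by a class $S$ consists of objects admitting finite filtrations whose graded pieces are subquotients of objects of $S$. I would then show that subquotients of elementary sheaves are themselves iterated extensions of elementary sheaves of lower-dimensional support, using the same induction. Two-out-of-three then closes the argument.
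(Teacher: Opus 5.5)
Your proposal is correct and is essentially the paper's argument: the paper says only that the corollary ``follows directly from the proof'' of Theorem~\ref{theointro:decomposition theorem}. As you observed, every reduction step in that proof (torsion filtration, $\cH^0_V$, the $\cI_V$-adic step, reflexive hulls, rank induction) is a short exact sequence in which two terms are already handled. One small slip: a saturated rank-one subsheaf is reflexive only inside a reflexive sheaf, so, as the paper does, first pass to $\cG^{\vee\vee}$ via $0\to\cG\to\cG^{\vee\vee}\to\cQ\to 0$ and use the kernel case of two-out-of-three.
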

In the smooth case, by mean of the Koszul resolution for structure sheaves of invariant subvarieties, we can improve this corollary.
\begin{corollary}
 \label{corintro:decomposition theorem smooth case}
 Let $X$ be a smooth toric variety.
 Assume that a property $P$ for objects in $\Coh^T(X)$ is satisfied by toric invertible sheaves and is preserved in short exact sequences. Then $P$ is satisfied by all coherent toric sheaves on $X$.
\end{corollary}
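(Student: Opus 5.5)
By Corollary \ref{corintro:decomposition theorem}, it is enough to show that every elementary toric sheaf satisfies $P$. So let $\cE\simeq\iota_*\cL$, where $\iota:V\to X$ is the inclusion of an invariant subvariety and $\cL$ is a rank one reflexive sheaf on $V$ with an equivariant structure on $\iota_*\cL$. Since $X$ is smooth, $V$ is a smooth toric variety: it is the orbit closure of a cone $\sigma$, and in the local charts it is cut out by a subset of the coordinates. On a smooth variety every rank one reflexive sheaf is invertible, so $\cL$ is a toric line bundle on $V$. Every toric line bundle on the toric variety $V$ is of the form $\cO_V(D_V)$ for a $T$-invariant Cartier divisor $D_V=\sum a_\rho D_\rho^V$. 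I will lift it to $X$: each invariant prime divisor of $V$ is $V\cap D_{\rho'}$ for a ray $\rho'$ of $\mathrm{Star}(\sigma)$ not in $\sigma$. Taking the corresponding invariant divisor $D$ on $X$, which is Cartier since $X$ is smooth, gives $\cL\simeq \iota^*\cO_X(D)$. Twisting by a character adjusts the linearisation, so that $\iota_*\cL\simeq \cO_X(D)\otimes\iota_*\cO_V$ equivariantly.

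Next, write $V=D_1\cap\dots\cap D_k$, where $D_1,\dots,D_k$ are the invariant divisors corresponding to the rays of $\sigma$. Because $X$ is smooth these divisors meet transversally, so $\cO_V$ is resolved by the Koszul complex
\[
0\to \cO_X\Big(-\textstyle\sum_{i=1}^k D_i\Big)\to\cdots\to\bigoplus_{i}\cO_X(-D_i)\to\cO_X\to\iota_*\cO_V\to 0 .
\]
Each term is a direct sum of toric invertible sheaves, and all maps are equivariant because they are built from the canonical sections $s_{D_i}$. Their weights are fixed by the standard linearisations of $\cO_X(-D_i)$. Exactness can be checked on the affine charts $\mathbb{A}^n\times T'$, where it is the Koszul complex of a regular sequence of coordinates. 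Tensoring with the invertible sheaf $\cO_X(D)$ keeps the complex exact, and gives a finite equivariant resolution of $\iota_*\cL$ whose terms are finite direct sums of toric invertible sheaves.

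Finally, property $P$ passes through this resolution by induction. A finite direct sum $\cA\oplus\cB$ fits in the split sequence $0\to\cA\to\cA\oplus\cB\to\cB\to0$, so every term of the resolution satisfies $P$. Now split the long exact sequence into short exact sequences $0\to Z_j\to\cK_j\to Z_{j-1}\to 0$, with $Z_{k}=\cK_k$ and $Z_{-1}=\iota_*\cL$. All the $Z_j$ are coherent toric sheaves, being kernels of equivariant maps. Starting from the left end and using the two-out-of-three property repeatedly, each $Z_j$ satisfies $P$, hence so does $\iota_*\cL$. Corollary \ref{corintro:decomposition theorem} then concludes.

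The main obstacle is the equivariant bookkeeping: identifying $\cL$ with the restriction of an equivariant invertible sheaf on $X$, with matching linearisation, and checking that the Koszul differentials are $T$-equivariant. I would do this locally on the cone $\sigma$, using the explicit description of equivariant structures by characters on affine charts.
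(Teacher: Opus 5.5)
Your proposal is correct and is essentially the paper's proof: reduce to elementary sheaves via Corollary~\ref{corintro:decomposition theorem}, identify an elementary sheaf with $(\iota_*\cO_V\otimes_{\cO_X}\cO_X(D))[m]$, resolve $\iota_*\cO_V$ by the Koszul complex of the invariant divisors $D_\rho$ containing $V$, tensor with the line bundle, and push $P$ through the resulting short exact sequences. The linearisation step you single out is exactly the content of the paper's Lemma~\ref{lem:elementary toric sheaf geometric}, and your plan for it works: the ratio of two equivariant structures on $\iota_*\cL$ is a unit on $T\times V$ satisfying the cocycle condition, hence a character of $T$.
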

Note that we don't assume (quasi)projectivity nor completeness of $X$ in Corollary \ref{corintro:decomposition theorem smooth case}, which prevents from using directly equivariant locally free resolutions or Klyachko's resolution to obtain the result.

The proof of Theorem \ref{theointro:decomposition theorem} actually carries over for coherent sheaves on normal varieties, leaving the toric assumptions, and replacing elementary toric sheaves by pushforwards of rank one reflexive sheaves supported on subvarieties. However its usefulness in the toric setting comes from two crucial facts. First, the orbit-cone correspondence implies that there is a finite number of invariant subvarieties in $X$. Second, these subvarieties are toric and normal, and their rank one reflexive sheaves are generated by a finite number of irreducible invariant Weil divisors. Those finiteness properties, together with the fan description of those objects, turn Corollaries \ref{corintro:decomposition theorem} and \ref{corintro:decomposition theorem smooth case} into powerful devices to obtain results about  toric sheaves.

We will illustrate this by extending Klyachko's results on cohomology of toric vector bundles over smooth toric varieties \cite{Kly90}  to coherent toric sheaves over normal toric varieties, cf Theorem \ref{theo:computing cohomology of toric sheaves}. While this generalization is likely to be familiar to experts, cf Remark \ref{rem:quasicoherent extension of cohomology for toric sheaves}, our approach appears to be new. Klyachko’s original proof relies on a dual complex together with Serre duality \cite[Theorem 4.1.1 and Remark 4.1.2]{Kly90}. This double application of duality may seem unnatural. In contrast, our approach is more direct;  Corollary \ref{corintro:decomposition theorem} allows us to reduce to the case of rank one reflexive sheaves, which in turn follows from elementary homology computations for cellular decompositions of the sphere. Furthermore, as we show in Section \ref{sec:alexander and serre dualities}, Serre duality for toric line bundles follows from Alexander duality for compact subsets of the sphere. This perspective leads to an interpretation of Serre duality for toric vector bundles on smooth complete toric varieties as a version of Alexander duality with local coefficient systems.

\subsection*{Organisation of the paper} We start recalling the necessary background on toric sheaves and $\Sigma$-families in Section \ref{sec:toric sheaves}. The proofs of Theorem \ref{theointro:decomposition theorem} and Corollaries \ref{corintro:decomposition theorem} and \ref{corintro:decomposition theorem smooth case} are then given in Section \ref{sec:decomposition}. Finally, the application to cohomology is derived in Section \ref{sec:cohomology}.

\subsection*{Acknowledgments} CT would like to thank Ronan Terpereau for stimulating discussions on the topic.   The author is partially supported by the grants MARGE ANR-21-CE40-0011 and BRIDGES ANR-FAPESP ANR-21-CE40-0017. He also beneficiated from the support of the French government “Investissements d'Avenir” program integrated to France 2030, under the reference ANR-11-LABX-0020-01.

\subsection*{AI declaration use} No LLM was used to produce the content of this article. We encourage the Association for Human Mathematics (AHM).

\subsection*{Conventions and Notations}
We work over an algebraically closed field $\K$ and follow notations from \cite{CLS,Per04} for toric varieties and toric sheaves. In particular, we denote by $T=N\otimes_\Z \K^*$ the torus of a toric variety $X$, with $N$ the rank $n$ lattice of its one-parameter subgroups. We set $M=\Hom_\Z(N,\Z)$ to be the character lattice. Assuming $X$ to be normal, $\Sigma$ will stand for its fan of strongly convex rational polyhedral cones in $N_\R=N\otimes_\Z \R$ (see \cite[Chapter 3]{CLS}). For each $\sigma\in\Sigma$, we let $\sigma^\vee$ denote the dual cone, and $\sigma^\perp$ its orthogonal - both for the duality pairing $\langle\cdot,\cdot\rangle$. The variety $X$ is then covered by the $T$-invariant affine varieties \[U_\sigma=\Spec(\K[S_\sigma]),\] where  $\sigma\in \Sigma$ and \[S_\sigma=M\cap\sigma^\vee.\] We denote by $\tau\leq\sigma$ the inclusion of a face $\tau$ in $\sigma$, and $\tau<\sigma$ if the inclusion is strict (recall then that we have $U_\tau\subset U_\sigma$).  For $m\in M$, we let $\chi^m\in\K[S_\sigma]$ be the associated regular function on $U_\sigma$. The orbit associated to a cone $\sigma\in\Sigma$ by the orbit-cone correspondence will be denoted by $O(\sigma)$, and its closure by $V(\sigma)=\overline{O(\sigma)}$ \cite[Section 3.2]{CLS}; when $\rho\in\Sigma(1)$ is a ray, we also set $D_\rho=V(\rho)$. Finally, $\Sigma(d)$ (resp. $\sigma(d)$) stands for the $d$-dimensional cones in $\Sigma$ (resp. the $d$-dimensional faces of $\sigma$).

\section{Background on toric sheaves}
\label{sec:toric sheaves}
We will briefly recall Perling's description of toric coherent sheaves in terms of $\Sigma$-families, and refer to \cite{Per04} for further details and proofs. Let $X$ be a normal toric variety with torus $T$ and fan $\Sigma$.

\subsection{Equivariant sheaves}
\label{sec:equiv sheaves definition}
Denote the regular torus action by $ \alpha : T\times X\to X$. Let $\pi_i:T\times X\to X$ be the projection onto the $i$-th factor and  $\mu:T\times T\to T$ the multiplication. We will also use $\pi_{23}:T\times T\times X\to T\times X$ the projection on the last two factors, and for any given variety $Y$, $1_Y:Y\to Y$  the identity morphism.
\begin{definition}
 \label{def:equivariant sheaf}
 Let $\cE$ be a sheaf of $\cO_X$-modules on $X$. A \textit{$T$-equivariant structure} on $\cE$ is an isomorphism
 \[
 \Phi : \alpha^*\cE\to\pi_2^*\cE
 \]
 satisfying the following cocycle condition :
 \begin{equation*}
  \label{eq:cocycle condition equiv sheaf}
 (\mu\times 1_X)^*\Phi=\pi_{23}^*\Phi\circ(1_T\times\alpha)^*\Phi.
 \end{equation*}
 A \textit{toric sheaf} on $X$ is a pair $(\cE,\Phi)$ where $\cE$ is a sheaf of $\cO_X$-modules and $\Phi$ a $T$-equivariant structure on $\cE$. 
\end{definition}
 We will often omit the isomorphism $\Phi$ in the notations, and simply denote by $\cE$ a toric sheaf. A morphism  between two toric sheaves $(\cE,\Phi)$ and $(\cF,\Psi)$ is a morphism $\theta : \cE \to \cF$ of sheaves of $\cO_X$-modules such that 
\begin{equation*}
 \label{eq:def T equiv morphism sheaves}
\pi_2^*\theta\circ\Phi=\Psi\circ\alpha^*\theta.
\end{equation*}
The class of toric sheaves together with their morphisms form a category. By restricting the class of objects to quasicoherent (resp. coherent) sheaves of $\cO_X$-modules, one obtains the abelian categories of $T$-equivariant quasicoherent sheaves (resp. of $T$-equivariant coherent sheaves), denoted $\qCoh^T(X)$ (resp. $\Coh^T(X)$).

\subsection{Sigma families}
\label{sec:Sigma families}
Given a toric quasicoherent sheaf $\cE$, the equivariant structure induces naturally a representation of $T$ on each space of sections $\Gamma(U_\sigma,\cE)$, which splits according to weight spaces by the complete reducibility theorem :
\[
\Gamma(U_\sigma,\cE)=\bigoplus_{m\in M} \, \Gamma(U_\sigma,\cE)_m,
\]
where $\Gamma(U_\sigma,\cE)_m$ stands for weight $m$ sections. This induces an $M$-graded $\K[S_\sigma]$-module structure on $\Gamma(U_\sigma,\cE)$.  For $m\in S_\sigma$, multiplication by the regular function $\chi^m$ defines a degree-$m$ endomorphism of $\Gamma(U_\sigma,\cE)$. Hence, for each pair $(m,m')\in M$, the following
\[
\begin{array}{cccc}
 \chi^{m-m'}: & \Gamma(U_\sigma,\cE)_{m'} & \to &\Gamma(U_\sigma,\cE)_{m}\\
        &  f_{m'} & \mapsto & \chi^{m-m'}\cdot f_{m'}
\end{array}
\]
is well defined provided that $m-m'\in S_\sigma=\sigma^\vee\cap M$, that we will denote by $m'\leq_\sigma m$ ($\leq_\sigma$ descends to a partial order on $M/(\sigma^\perp\cap M)$). Setting
\[
E^\sigma_m:=\Gamma(U_\sigma,\cE)_m,
\]
and,  for $m\leq_\sigma m'$,
\[
\chi_{m,m'}^\sigma:=\chi^{m'-m},
\]
the collection of vector spaces $(E^\sigma_m)$ and maps $(\chi^\sigma_{m,m'})_{m\leq_\sigma m'}$ defines a \textit{$\sigma$-family} $E^\sigma$ in Perling's terminology \cite{Per04}  (we drop the morphisms $\chi^\sigma$ to ease notations).  Consider now $\sigma\in \Sigma$ and $\tau\leq\sigma$. The injection
\[
\iota_{\tau\sigma}:U_\tau\to U_\sigma
\]
has associated morphism of $\K$-algebras given by localisation
\[
\iota_{\tau\sigma}^*:\K[S_\sigma]\to\K[S_\tau]\simeq\K[S_\sigma]_{\chi^m},
\]
where $m\in\sigma^\vee$ is chosen so that $S_\tau=S_\sigma+\N\cdot(-m)$. Let $(\cE,\Phi)$ be a toric sheaf on $U_\sigma$. As the torus action commutes with the inclusion $\iota_{\tau\sigma}$, $\iota_{\tau\sigma}^*\cE$ naturally inherits an equivariant structure $\iota_{\tau\sigma}^*\Phi:=(1_T\times\iota_{\tau\sigma})^*\Phi$. When $\cE$ is quasicoherent, if $E^\sigma$ stands for the $\sigma$-family associated to $(\cE,\Phi)$, we will denote $\iota_{\tau\sigma}^*E^\sigma$ the  $\tau$-family associated to $\iota_{\tau\sigma}^*(\cE,\Phi)$. In a similar way, any morphism $\cE\to\cF$ between two quasicoherent toric sheaves on $U_\sigma$ yields a morphism $\phi^\sigma:E^\sigma\to F^\sigma$ between the associated $\sigma$-families that can be pulled back to a morphism $\iota_{\tau\sigma}^*\phi : \iota_{\tau\sigma}^*E^\sigma\to\iota_{\tau\sigma}^*F^\sigma$. These observations yield to $\Sigma$-families.
\begin{definition}
 \label{def:Sigma families}
 A \textit{$\Sigma$-family} for a fan $\Sigma$ is a pair $(E^\Sigma,\eta^\Sigma)$ consisting of a collection $E^\Sigma=(E^\sigma)_{\sigma\in\Sigma}$ of $\sigma$-families and a collection $\eta^\Sigma=(\eta_{\tau\sigma})_{\tau\leq \sigma}$ of isomorphisms $\eta_{\tau\sigma} : \iota_{\tau\sigma}^*E^\sigma \to E^\tau$ such that
 \begin{enumerate}
  \item for each $\sigma\in\Sigma$, $\eta_{\sigma\sigma}=1_{E^\sigma}$,
  \item for each $\rho\leq \tau\leq \sigma$, $\eta_{\rho\sigma}=\eta_{\rho\tau}\circ \iota_{\rho\tau}^*\eta_{\tau\sigma}$.
 \end{enumerate}
 A \textit{morphism of $\Sigma$-families} $\Phi^\Sigma : (E^\Sigma,\eta^\Sigma)\to (F^\Sigma,\nu^\Sigma)$ is a collection of morphisms $(\phi^\sigma : E^\sigma \to F^\sigma)_{\sigma\in\Sigma}$ of $\sigma$-families such that for all $\tau\leq\sigma$,
 \[
 \nu_{\tau\sigma}\circ \iota_{\tau\sigma}^*\phi^\sigma=\phi^\tau\circ \eta_{\tau\sigma}.
 \]
\end{definition}
The following is due to Perling \cite{Per04}.
\begin{theorem}
 \label{theorem:equiv categories Tsheaves and Sigma families}
 For a given fan $\Sigma$, the $\Sigma$-families together with their morphisms form a category that is equivalent to $\qCoh^T(X_\Sigma)$.
\end{theorem}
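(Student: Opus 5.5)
The plan is to build two functors and show they are mutually quasi-inverse: a sections functor $\qCoh^T(X_\Sigma)\to\{\Sigma\text{-families}\}$ and a gluing functor back. Throughout, I would use that $X_\Sigma$ is covered by the invariant affines $U_\sigma$, glued along $U_\tau=U_\sigma\cap U_{\sigma'}$ for $\tau=\sigma\cap\sigma'$.

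\textbf{Step 1: the affine case.} The first step is to prove that, for a single cone $\sigma$, $\qCoh^T(U_\sigma)$ is equivalent to the category of $M$-graded $\K[S_\sigma]$-modules. Write $A=\K[S_\sigma]$.
\begin{itemize}
\item A quasicoherent sheaf on $U_\sigma$ is the same as an $A$-module $E=\Gamma(U_\sigma,\cE)$.
\item An isomorphism $\Phi:\alpha^*\cE\to\pi_2^*\cE$ is a $\K[M]\otimes A$-linear isomorphism $E\otimes_{A,\alpha^*}(\K[M]\otimes A)\to \K[M]\otimes E$. Equivalently, it is a coaction $E\to \K[M]\otimes E$ that is semilinear over the coaction $\chi^m\mapsto\chi^m\otimes\chi^m$ of $A$.
\item The cocycle condition is exactly coassociativity, and the restriction to the identity of $T$ gives counitality.
\item A $\K[M]$-comodule is the same as an $M$-grading. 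This is the complete reducibility statement recalled above, valid with no finiteness hypothesis since it is the algebraic statement about comodules over a group algebra.
\end{itemize}
Semilinearity translates into $\chi^m E_{m'}\subset E_{m+m'}$, and equivariant morphisms correspond to degree-preserving module maps. Finally, an $M$-graded $A$-module is precisely a $\sigma$-family: the spaces $E_m$ with maps $\chi^{m'-m}$ for $m\le_\sigma m'$, satisfying the evident compatibilities. Conversely, the direct sum of a $\sigma$-family is a graded $A$-module.

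\textbf{Step 2: compatibility with restriction.} Next I would check that restriction along $\iota_{\tau\sigma}$ corresponds to graded localisation $E\mapsto E_{\chi^m}$. This holds because localisation at a homogeneous element preserves the grading and is compatible with the coactions. It identifies $\iota_{\tau\sigma}^*E^\sigma$ with the $\tau$-family of the restricted sheaf, naturally in $E$.

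\textbf{Step 3: sheaves to $\Sigma$-families.} Given $(\cE,\Phi)$, I set $E^\sigma$ to be the family of $\cE|_{U_\sigma}$. I take $\eta_{\tau\sigma}$ to be the canonical restriction isomorphism $\Gamma(U_\sigma,\cE)_{\chi^m}\simeq\Gamma(U_\tau,\cE)$. The two conditions of Definition~\ref{def:Sigma families} then follow from functoriality of restriction, which gives a functor.

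\textbf{Step 4: $\Sigma$-families to sheaves.} Conversely, given $(E^\Sigma,\eta^\Sigma)$, each $E^\sigma$ gives an equivariant sheaf $\cE_\sigma$ on $U_\sigma$ by Step 1. Condition (2) of the definition gives compatible isomorphisms on the overlaps $U_\sigma\cap U_{\sigma'}=U_{\sigma\cap\sigma'}$. To glue, for cones $\sigma,\sigma'$ with common face $\tau$ I define the transition map $\eta_{\tau\sigma'}^{-1}\circ\eta_{\tau\sigma}$.

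\textbf{Main obstacle: the cocycle condition for gluing.} The step I expect to be hardest is the cocycle condition on triple overlaps $U_{\sigma\cap\sigma'\cap\sigma''}$. I would derive it by factoring every transition through the common face $\rho=\sigma\cap\sigma'\cap\sigma''$ using condition (2), so that all transitions become $\eta_{\rho\bullet}^{-1}\circ\eta_{\rho\bullet}$ and cancel. Care is needed here: the data is only indexed by face inclusions, not by arbitrary pairs of cones, so this reduction through $\rho$ is what makes the cocycle condition hold.

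\textbf{Gluing the equivariant structures.} The structures $\Phi_\sigma$ glue because each transition map is equivariant, being a morphism of families. They satisfy the cocycle condition globally, since that condition can be checked on the cover $T\times U_\sigma$.

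\textbf{Conclusion.} The two constructions are quasi-inverse. Sheaves glue uniquely from restrictions, and Step 1 is an equivalence at the affine level. Functoriality on morphisms is immediate from the definition of morphisms of $\Sigma$-families.
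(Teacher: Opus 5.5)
Your argument is correct. The paper gives no proof of its own and attributes the result to Perling \cite{Per04}; your route is essentially the argument there. It identifies equivariant quasicoherent sheaves on $U_\sigma$ with $M$-graded $\K[S_\sigma]$-modules, treats restriction to faces as graded localisation, and glues over the invariant affine cover, with condition (2) giving the cocycle identity.

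Two small points need tightening. Counitality follows from the cocycle condition together with invertibility of $\Phi$: pulling back along $x\mapsto(1,1,x)$ gives $\Phi_1=\Phi_1\circ\Phi_1$. Also, the cocycle condition for the glued $\Phi$ lives on $T\times T\times X$, so it must be checked on the cover $T\times T\times U_\sigma$ rather than on $T\times U_\sigma$.
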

Coherence of $\cE\in\Coh^T(X)$ translates into \textit{finiteness} of the associated $\Sigma$-family by \cite[Proposition 5.11]{Per04}.
\begin{definition}
 \label{def:finite sigma families}
 A $\sigma$-family $(E^\sigma,\chi^\sigma)$ is \textit{finite} if it satisfies the following :
 \begin{enumerate}
  \item For each $m\in M$, $E^\sigma_m$ is finite dimensional,
  \item For each strictly increasing sequence $(m_i)_{i\in\Z}\in M^\Z$, i.e. such that 
  \[
\ldots<_\sigma m_{i-1} <_\sigma m_i <_\sigma m_{i+1} <_\sigma \ldots,
\] there exists $i_0\in\Z$ such that $E^\sigma_{m_i}=\lbrace 0\rbrace$ for $i\leq i_0$,
  \item For each $m\in M$, denote by $C^\sigma_m$ the cokernel of the map 
  \[
 \sum_{m'<_\sigma m} \chi^\sigma_{m',m} : \bigoplus_{m'<_\sigma m} E^\sigma_{m'} \to E^\sigma_m.
  \]
  Then the set of classes $[m]\in M/(\sigma^\perp\cap M)$ such $C^\sigma_m\neq\lbrace 0\rbrace$ is finite.
 \end{enumerate}
A $\Sigma$-family $(E^\Sigma,\eta^\Sigma)$ is \textit{finite} if for each $\sigma\in\Sigma$, $E^\sigma$ is finite.
\end{definition}
 Finally, when $\cE$ is torsion-free, the morphisms $(\chi^\sigma_{m,m'})$ and $(\eta_{\tau\sigma})$ provide injections of any space $E^\sigma_m$ of the associated $\Sigma$-family to a finite dimensional limit $E=\displaystyle\lim_{\to} E^\sigma_m$. In that case, we may view the $\Sigma$-family as collections of vector subspaces $E^\sigma_m\subset E$ and replace the maps $\chi^\sigma_{m,m'}$ by inclusions. This corresponds to \textit{families of multifiltrations}, which is the data of vector subspaces $(E^\sigma_m)_{\sigma\in\Sigma,m\in M}$ for a given finite dimensional space $E$, satisfying finiteness as in Definition \ref{def:finite sigma families}, together with the compatibility conditions :
 \begin{enumerate}
  \item For any pair $m\leq_\sigma m'$, $E^\sigma_m\subset E^\sigma_{m'},$
   \item  $E$ is the direct limit of the direct system $(E^\sigma_m)$ over $(M,\leq_\sigma)$,
\item For any face $\tau\leq\sigma$, and any $m\in M$, $E^\sigma_m\subset E^\tau_m$. Moreover, if $\tau=m_0^\perp\cap\sigma$ for $m_0\in\sigma^\vee$, there is $i_m\in\Z$ such that  for all $i\geq i_m$, $E^\tau_m= E^\sigma_{m+im_0}.$
 \end{enumerate}
 By \cite[Sections 5.3 and 5.4]{Per04}, families of multifiltrations correspond to torsion-free coherent toric sheaves.

\subsection{Examples : rank one reflexive sheaves}
\label{sec:examples}
Let $\cL$ be a rank one reflexive sheaf on $X$.
From \cite[Chapter 8.0]{CLS}, there are integers $(a_\rho)_{\rho\in\Sigma(1)}$ such that
 \[
 \cL\simeq \cO_X\big(\sum_{\rho\in\Sigma(1)} a_\rho\,D_\rho\big).
 \]
 In particular, $\cL$ is isomorphic to a $T$-equivariant rank $1$ reflexive sheaf.
\begin{proposition}
 \label{prop:family of filtrations rank one reflexive sheaves}
 Let $(L^\sigma_m)$ be the family of multifiltrations for $\cL$. Then for any $\sigma\in\Sigma$,
 \[
 L^\sigma_m = \left\{
 \begin{array}{ccc}
\lbrace 0\rbrace & \mathrm{ if } & \exists\; \rho\in\sigma(1)\:\vert\:   \langle m, u_\rho \rangle < -a_\rho \\
 \K             & \mathrm{ if } & \forall\; \rho\in\sigma(1)\:\vert\:   \langle m, u_\rho \rangle\geq -a_\rho.
 \end{array}
 \right.
 \]
\end{proposition}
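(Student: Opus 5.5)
We compute the sections directly on each affine chart, using the standard description of sections of a divisorial sheaf on an affine toric variety. Write $D=\sum_\rho a_\rho D_\rho$ and fix $\sigma\in\Sigma$. Since $D$ is $T$-invariant, $\cO_X(D)$ carries its natural equivariant structure. The torsion-free limit space is $E=\K$, identified with the weight-$m$ part $\K\cdot\chi^m$ of the function field $\K(X)=\K(M)$. Any other equivariant structure on $\cL\simeq\cO_X(D)$ differs from this one by a character, which only translates the indexing $m$. This can be absorbed into the choice of representative $D$ in its class, so we work with the natural structure.

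The main step is the identity
\[
\Gamma(U_\sigma,\cO_X(D))=\bigoplus_{m\in M}\big\{\chi^m \;:\; \mathrm{div}(\chi^m)+D\geq 0 \text{ on } U_\sigma\big\}\cdot\K,
\]
as in \cite[Proposition 4.3.3]{CLS}. I would justify it as follows. Since $\cO_X(D)$ is reflexive and $U_\sigma$ is normal, a rational function $f$ is a section on $U_\sigma$ iff $\mathrm{div}(f)+D|_{U_\sigma}\geq 0$. The $T$-stable subspace of such $f$ splits into weight spaces, and each weight space lies in $\K\cdot\chi^m$ since characters are the only semi-invariant rational functions up to scalars. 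So $L^\sigma_m$ is either $\{0\}$ or $\K\chi^m$, and it equals $\K\chi^m$ precisely when $\chi^m$ is a section.

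Next we compute the condition concretely. The invariant prime divisors meeting $U_\sigma$ are exactly the $D_\rho$ for $\rho\in\sigma(1)$, by the orbit-cone correspondence, and $\mathrm{div}(\chi^m)=\sum_\rho\langle m,u_\rho\rangle D_\rho$. Hence $\mathrm{div}(\chi^m)+D\geq0$ on $U_\sigma$ iff $\langle m,u_\rho\rangle+a_\rho\geq 0$ for all $\rho\in\sigma(1)$. This gives the two cases of the statement.

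Finally, we check that this is the multifiltration structure. For $m\leq_\sigma m'$, multiplication by $\chi^{m'-m}$ sends $\chi^m$ to $\chi^{m'}$, so under the identification with $E=\K$ the maps $\chi^\sigma_{m,m'}$ become inclusions, as required. The restriction maps $\eta_{\tau\sigma}$ are likewise restriction of rational functions, hence compatible with these identifications. The only delicate point is the equivariant-structure normalisation discussed above. The rest is standard bookkeeping via Theorem \ref{theorem:equiv categories Tsheaves and Sigma families}.
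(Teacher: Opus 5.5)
Your proposal is correct and follows essentially the same route as the paper: both identify the weight-$m$ sections with $\K\cdot\chi^m$ and test the divisorial condition through $\nu_\rho(\chi^m)=\langle m,u_\rho\rangle$. The only difference is that you impose $\mathrm{div}(\chi^m)+D\geq 0$ directly on $U_\sigma$ (as in \cite[Proposition 4.3.3]{CLS}), whereas the paper computes on each $U_\rho$ and then intersects over $\rho\in\sigma(1)$, using normality of the reflexive sheaf $\cL$. Your explicit treatment of the equivariant normalisation, absorbing a character shift into the representative $D$, makes precise what the paper only flags as an abuse of terminology.
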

Of course, there is a slight abuse of terminology in the above proposition, as 'the' family of multifiltrations is only defined up to  isomorphisms.
\begin{proof}
Let $\rho\in\Sigma(1)$. By definition,
\[
\Gamma(U_\rho,\cL)=\lbrace f\in\K(X)^*\:\vert\: \mathrm{div}(f)_{\vert U_\rho}+ a_\rho D_\rho \geq 0\rbrace \cup\lbrace 0\rbrace.
\]
Let $m\in M$. Then, $\mathrm{div}(\chi^m)$ is $T$-invariant. By the orbit-cone correspondence, $\mathrm{div}(\chi^m)_{\vert U_\rho}$ is supported on $D_\rho$ and the valuation of $\chi^m$ is given by $\nu_\rho(\chi^m)=\langle m , u_\rho \rangle$ \cite[Proposition 4.1.1]{CLS}. Hence, $\mathrm{div}(\chi^m)_{\vert U_\rho}+ a_\rho D_\rho \geq 0$ if and only if $\langle m , u_\rho \rangle+a_\rho\geq 0$. We deduce that the degree-$m$ component of $\Gamma(U_\rho,\cL)$ is given by
\[
\Gamma(U_\rho,\cL)_m=\left\{
 \begin{array}{ccc}
\lbrace 0\rbrace & \mathrm{ if } & \langle m , u_\rho \rangle < -a_\rho \\
 \K  \cdot\chi^m           & \mathrm{ if } & \langle m , u_\rho \rangle\geq -a_\rho.
 \end{array}
 \right.
\]
The result follows, as by definition:
\begin{equation*}
 \begin{array}{ccc}
L^\sigma_m & \simeq &  \Gamma(U_\sigma,\cL)_m\\
 & \simeq & \displaystyle\bigcap_{\rho\in\sigma(1)} \, \Gamma(U_\rho,\cL)_m
 \end{array}
\end{equation*}
where for the last isomorphism we used reflexivity of $\cL$ which implies normality, cf \cite[Proposition 1.6]{Har80}.
\end{proof}

\section{Decomposition theorem}
\label{sec:decomposition}

\subsection{Elementary toric sheaves}
Recall that for a coherent sheaf $\cE$ on a normal variety $X$, we define the dimension of $\cE$ to be
\[\dim(\cE):=\dim(\Supp(\cE)).\] 
The sheaf $\cE$ is then said to be \textit{pure} (of dimension $d=\dim\cE$) if all its non-trivial coherent subsheaves have dimension $d$. 
\begin{definition}
 \label{def:elementary toric sheaf}
 A coherent toric sheaf $\cE$ on a normal toric variety $X$ is \textit{elementary} if it is pure, with reduced and irreducible support, and restricts to a rank one toric reflexive sheaf on its support.
\end{definition}
Let $\cE$ be an elementary toric sheaf on $X$, with reduced support $\iota : V \to X$. Then, one has
$\cE\simeq\iota_* \iota^*\cE$ and by definition, there exists a rank one reflexive toric sheaf $\cL$ on $V$ such that $\cE\simeq\iota_*\cL$. The latter is an isomorphism of sheaves. Note however that the pushforward $\iota_*$ does not provide a natural equivariant structure on $\cE$. Hence, an elementary toric sheaf is specified by an invariant subvariety $\iota : V\to X$, a rank $1$ reflexive sheaf $\cL$ on $V$, \textit{and} an equivariant structure on $\iota_*\cL$. More precisely, in the simplicial case, we have :
\begin{lemma}
 \label{lem:elementary toric sheaf geometric}
 Assume that $X$ is simplicial. Let $\cE$ be an elementary toric sheaf on $X$. Then there are a cone $\tau\in\Sigma(d)$,
  integers $(a_\rho)_{\rho+\tau\in\Sigma(d+1)}$, and a weight $m\in M$ such that 
  \begin{equation}
   \label{eq:elementary toric sheaf geometric}
  \cE\simeq \Big(\big(\iota_*\cO_{V(\tau)}\big)\otimes_{\cO_X}\cO_X\big(\sum_{\tau+\rho\in\Sigma(d+1)}a_\rho \, D_\rho\big) \Big)[m],
  \end{equation}
where $\iota : V(\tau)\to X$ stands for the inclusion map, and $[m]$ denotes the shifted equivariant structure.
\end{lemma}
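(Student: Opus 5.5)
We argue in three steps: identify the support, identify the underlying sheaf via divisors on the orbit closure, and then classify the possible equivariant structures.

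\textbf{Support and underlying sheaf.} The support of $\cE$ is a closed, reduced, irreducible subvariety $V\subset X$. Since $\cE$ carries an equivariant structure, $V$ is $T$-invariant. By the orbit-cone correspondence, $V=V(\tau)$ for a unique $\tau\in\Sigma$; set $d=\dim\tau$. As noted above, $\cE\simeq\iota_*\cL$ with $\cL$ a rank one reflexive sheaf on $V(\tau)$. Now $V(\tau)$ is a normal toric variety whose fan $\mathrm{Star}(\tau)$ lives in $N(\tau)_\R=N_\R/\mathrm{span}(\tau)$. Its rays are the images $\bar\rho$ of the rays $\rho$ with $\tau+\rho\in\Sigma(d+1)$, and the corresponding invariant prime divisors are $V(\tau+\rho)$. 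By Section \ref{sec:examples} applied to $V(\tau)$, there are integers $a_\rho$ with
\[
\cL\simeq\cO_{V(\tau)}\big(\textstyle\sum a_\rho V(\tau+\rho)\big).
\]

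\textbf{Relating to divisors on $X$.} We compare $\cL$ with $\iota^*\cO_X(\sum a_\rho D_\rho)$. Simpliciality is used here: it makes every Weil divisor $\Q$-Cartier, and locally on $U_\sigma$ (with $\sigma\geq\tau$ simplicial) we can choose $m_\sigma\in M_\Q$ with
\[
\langle m_\sigma,u_\rho\rangle=-a_\rho \text{ for } \rho\in\sigma(1)\setminus\tau(1), \qquad \langle m_\sigma,u_\rho\rangle=0 \text{ for } \rho\in\tau(1).
\]
We then compute both sides via $\Sigma$-families, or more concretely via graded modules on each $U_\sigma$, $\sigma\geq\tau$. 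For $D=\sum a_\rho D_\rho$, the module $\Gamma(U_\sigma,\cO_X(D))$ is spanned by the $\chi^m$ satisfying the inequalities of Proposition \ref{prop:family of filtrations rank one reflexive sheaves}. The module $\Gamma(U_\sigma,\iota_*\cO_{V(\tau)})$ is spanned by the $\chi^m$ with $m\in\tau^\perp\cap\sigma^\vee$. We then compare the tensor product (or its reflexive hull on $V(\tau)$) with $\Gamma(U_{\bar\sigma},\cL)$, which is spanned by the $\chi^m$ with $m\in\tau^\perp$ and $\langle m,u_{\bar\rho}\rangle\geq -a_\rho$. Here $u_{\bar\rho}$ is the image of $u_\rho$, and we use that $\langle m,u_\rho\rangle$ only depends on $\bar\rho$ for $m\in\tau^\perp$.

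This comparison is the main obstacle. The tensor product need not be torsion-free or reflexive on $V(\tau)$. Possibly the right-hand side of \eqref{eq:elementary toric sheaf geometric} must be read with its reflexive hull, or the simplicial hypothesis is exactly what guarantees agreement. I would first verify agreement on each $U_\sigma$ by checking that the images of the graded pieces coincide in degrees $m\in\tau^\perp$. I would also exploit the freedom to replace $D$ by a linearly equivalent invariant divisor, absorbing the shift into $[m]$.

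\textbf{Equivariant structures.} Finally, we classify the equivariant structures on $\iota_*\cL$. The sheaf $\cE$ is simple: $\mathrm{End}(\cE)=\Gamma(V(\tau),\cO)$ restricted to invariants is $\K$, since $\cL$ is rank one reflexive on the normal variety $V(\tau)$. Two equivariant structures therefore differ by a character of $T$. Concretely, in terms of $\Sigma$-families, both give the same multifiltration up to a global translation of the weights by some $m\in M$. Hence $\cE$ is isomorphic to the displayed sheaf with its canonical structure, shifted by $[m]$.
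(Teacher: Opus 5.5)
Your outline follows the paper's: identify the support as $V(\tau)$, write $\cL\simeq\cO_{V(\tau)}(\sum a_\rho V(\tau+\rho))$, identify $\iota_*\cO_{V(\tau)}\otimes\cO_X(D)=\iota_*\iota^*\cO_X(D)$ with $\iota_*\cL$, then show equivariant structures differ by a character. However, you leave the central identification open, so as written this is not a proof.

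The paper settles that step in one line ("as it is supported on $V$"). In effect it assumes $\iota^*\cO_X(\sum a_\rho D_\rho)\simeq\cO_{V(\tau)}(\sum a_\rho V(\tau+\rho))$. This is right when $X$ is smooth, which is the only case where the lemma is used (Corollary \ref{corintro:decomposition theorem smooth case}). In that case your local plan closes at once. For $\sigma\geq\tau$ the $u_\rho$, $\rho\in\sigma(1)$, are part of a basis of $N$, so your $m_\sigma$ can be taken in $M\cap\tau^\perp$ rather than rationally. Then $\Gamma(U_\sigma,\cO_X(D))=\chi^{m_\sigma}\K[S_\sigma]$, and tensoring with $\K[S_\sigma\cap\tau^\perp]$ gives $\chi^{m_\sigma}\K[S_\sigma\cap\tau^\perp]$. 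This equals $\Gamma(U_\sigma\cap V(\tau),\cL)$ because the image of $u_\rho$ in $N/(N\cap\mathrm{Span}(\tau))$ is primitive.

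Your alternative hope, that simpliciality "is exactly what guarantees agreement", is wrong. For simplicial non-smooth $X$ the statement itself fails, whatever the $a_\rho$.

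Take $\sigma=\mathrm{Cone}(e_1,\,e_1+2e_2+e_3,\,e_3)$ with rays $\rho_1,\rho_2,\tau$. Using exponents $(\langle m,u_{\rho_1}\rangle,\langle m,u_{\rho_2}\rangle,\langle m,u_\tau\rangle)$:
\begin{itemize}
\item $R=\K[S_\sigma]$ is the ring of even-degree polynomials in $x_1,x_2,x_3$.
\item The ideal $I$ of $V(\tau)$ consists of the even-degree polynomials divisible by $x_3$, so $V(\tau)$ is the quadric cone.
\item $\mathrm{Cl}(U_\sigma)=\Z/2$.
\end{itemize}
If $a_1+a_2$ is even, $\cO_X(a_1D_{\rho_1}+a_2D_{\rho_2})$ is trivial and restricts to $\cO_{V(\tau)}$. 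If it is odd, this sheaf is isomorphic to the module $R_{\mathrm{odd}}$ of odd-degree polynomials. In $R_{\mathrm{odd}}/IR_{\mathrm{odd}}$ the class of $x_3$ is nonzero but killed by $x_1^2$, so the restriction has torsion. Hence the elementary sheaf $\iota_*\cO_{V(\tau)}(V(\tau+\rho_1))$, which is pure and not locally free on $V(\tau)$, is not of the stated form. Replacing $D$ by a linearly equivalent divisor cannot help, since only the class matters.

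There is a second problem. The image of $u_\rho$ in $N/N_\tau$ can be $k_\rho u_{\bar\rho}$ with $k_\rho>1$; an example is the quadric cone surface with $\tau=\mathrm{Cone}(e_2)$, $\rho=\mathrm{Cone}(2e_1-e_2)$. So your sentence "$u_{\bar\rho}$ is the image of $u_\rho$" fails. The $\tau^\perp$-weights you propose to compare then give $\cO_{V(\tau)}(\sum\lfloor a_\rho/k_\rho\rfloor V(\tau+\rho))$, not $\cL$.

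Your first guess is the one that survives. The torsion of $\iota^*\cO_X(D)$ sits in weights outside $\tau^\perp$, since it vanishes over $U_\tau$. So taking coefficients $k_\rho a_\rho$ and passing to the reflexive hull on $V(\tau)$ recovers $\cL$.

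Finally, in your last step $\mathrm{End}(\cE)=\Gamma(V(\tau),\cO_{V(\tau)})$ is not $\K$ unless $V(\tau)$ is complete; the paper makes the same slip. What you need is that two equivariant structures differ by a unit on $T\times V(\tau)$ satisfying the cocycle condition. That forces the unit to be a character of $T$.
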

In the previous statement, the equivariant structure on $\iota_*\cO_{V(\tau)}$ is induced by the exact sequence
\[
0\to \cI_{V(\tau)} \to \cO_X\to \iota_*\cO_{V(\tau)}\to 0
\]
where $\cI_{V(\tau)}\subset\cO_X$ naturally inherits a toric structure by $T$-invariance of $V(\tau)$. The sum in
\[\cO_X\big(\sum_{\tau+\rho\in\Sigma(d+1)}a_\rho \, D_\rho\big) \]
is over rays $\rho\in\Sigma(1)$ such that $\tau+\rho\in\Sigma(d+1)$. Finally, $[m]$ stands for the shift $\Phi\mapsto \chi^m\cdot\Phi$ of the equivariant structure  by $\chi^m$.  
\begin{proof}
 By definition, there exists $V=V(\tau)$, for $\tau\in\Sigma(d)$, and a rank one reflexive sheaf $\cL$ on $V$ such that 
 $\cE\simeq\iota_*\cL$. We can find integers $(a_{\overline\rho})_{\overline\rho\in\Sigma_V(1)}$ such that
 \[
 \cL\simeq\cO_V(\sum_{\overline\rho\in\Sigma_V(1)} a_{\overline\rho} D_{\overline\rho}),
 \]
 where the $D_{\overline\rho}$'s stand for invariant divisors in $V$.
 As $X$ is simplicial, by the orbit-cone correspondence, one has a bijection between $T_X$-invariant divisors $D_\rho$ such that $\tau+\rho\in\Sigma(d+1)$ and $T_V$-invariant divisors on $V$ \cite[Theorem 3.2.6, Proposition 3.2.7]{CLS}. More precisely, for any $\rho\in\Sigma(1)$,
 \[
 D_\rho\cap V(\tau)=\big(\bigcup_{\rho\leq\sigma} O(\sigma)\big) \cap\big(\bigcup_{\tau\leq \sigma'} O(\sigma') \big)=\bigcup_{\rho+\tau\leq\sigma} O(\sigma),
 \]
 and the right hand side defines a divisor in $V(\tau)$ precisely when $\rho+\tau\in \Sigma(d+1)$. For $\rho\in\Sigma(1)$ such that $\tau+\rho\in\Sigma(d+1)$, we set $a_\rho:=a_{\overline{\rho}}$, where $\overline{\rho}$ is the projection of $\tau+\rho\in\Sigma$ to $\Sigma_V$ through the map
 \[
 N_\R\to N_\R/\mathrm{Span}_\R(\tau).
 \]
 As it is supported on $V$, we deduce that the sheaf 
 \[
 \big(\iota_*\cO_{V}\big)\otimes_{\cO_X}\cO_X\big(\sum_{\tau+\rho\in\Sigma(d+1)}a_\rho \, D_\rho\big)
 \]
 is isomorphic, as a coherent sheaf, to 
 \[
 \iota_*\Big(\cO_V(\sum_{\overline\rho\in\Sigma_V(1)} a_{\overline\rho} D_{\overline\rho})\Big)\simeq\iota_*\cL\simeq\cE.
 \]
 We claim that $\cE$ is simple, which implies the result, as all $T$-equivariant structures on a simple toric sheaf $(\cE,\Phi)$ are of the form $\chi^m\cdot \Phi$, for $m\in M$. To prove the claim, it is enough to notice that as $\iota_*\cO_V$ is supported on $V$, so is $\sHom_{\cO_X}(\iota_*\cO_V,\iota_*\cO_V)$, and then
 \[
 \sHom_{\cO_X}(\cE,\cE)\simeq \sHom_{\cO_X}(\iota_*\cO_V,\iota_*\cO_V)\simeq \iota_*\sHom_{\cO_V}(\cO_V,\cO_V).
 \]
We deduce that
 \[
 \mathrm{Hom}(\cE,\cE)\simeq \mathrm{Hom}(\cO_V,\cO_V)\simeq \K
 \]
 and the result follows.
\end{proof}
\subsection{Proof of Theorem \ref{theointro:decomposition theorem} and its corollaries}
\begin{definition}
 \label{def:serre subcategory}
 Let $\mathfrak{A}$ be an abelian category. A full subcategory $\mathfrak{B}$ is a \textit{Serre subcategory} if it contains $0$ and for any exact sequence
 \[
 E_1\to E_2\to E_3
 \]
 of objects in $\mathfrak{A}$, if $E_1$ and $E_3$ are in $\mathfrak{B}$, so is $E_2$.
\end{definition}
Taking $E_1=0$ in the definition, one sees that a Serre subcategory is closed under subobjects. Taking $E_3=0$ gives closeness under quotients. Hence, a Serre subcategory is a full and non-empty subcategory closed under subobjects, quotients, and extensions. While the statement of Theorem \ref{theointro:decomposition theorem} uses the categorical language, we will rather refer to it as a \textit{decomposition theorem} for toric sheaves. Indeed, we will see that the proof is constructive, and explains how to recover any coherent toric sheaf by successive short exact sequences, starting from elementary toric sheaves. In the proof, we will use the \textit{torsion filtration} of a coherent sheaf $\cE$ \cite[Chapter I, Section 1]{HuLe}. This is the unique filtration
\[
0\subset T_0(\cE)\subset T_1(\cE)\subset\ldots\subset T_d(\cE)=\cE
\]
where for each $0\leq i\leq d$, $T_i(\cE)\subset \cE$ is the maximal coherent subsheaf of dimension less or equal to $i$. When $\cE$ is toric, so are the $T_i(\cE)$'s by uniqueness, and by definition, $T_i(\cE)/T_{i-1}(\cE)$ is either zero, or pure and $i$-dimensional.

\begin{proof}[Proof of Theorem \ref{theointro:decomposition theorem}]
 It is enough to show that the smallest class of objects $\mathfrak{A}$ of $\Coh^T(X)$ containing elementary toric sheaves and stable under subobjects, quotients, and extensions, is the set of all coherent toric sheaves. Taking the full subcategory associated to this class gives the result. Let $\cE$ be a coherent toric sheaf. We will show that $\cE\in \mathfrak{A}$ by using a step by step decomposition of $\cE$ involving a finite number of short exact sequences. First, one can consider the torsion subsheaf $T(\cE)=T_{n-1}(\cE)\subset \cE$. We have a short exact sequence of toric sheaves :
 \[
 0\to T(\cE)\to \cE \to \cQ \to 0
 \]
 where the quotient $\cQ$ is torsion-free and equivariant. Hence, it is enough to prove that $\mathfrak{A}$ contains all torsion toric sheaves and all torsion-free toric sheaves. 
 
 We start by the torsion case, and use an induction on the dimension of the support. If $\cE$ is torsion and of dimension zero, then it is supported on the torus fixed points of $X$. By induction, we may assume that $\cE$ is supported on a single torus fixed point $p\in X$. Denoting $\iota: p\to X$ the inclusion, we have a short exact sequence of coherent toric sheaves
 \[
0\to \cI_p\cdot\cE\to\cE\to\iota_*\iota^*\cE\to 0
 \]
 supported on $p$. The multiplicity of $p$ in $\cI_p\cdot\cE$ is strictly smaller than in $\cE$, and by induction on $\mathrm{mult}_p(\cE)$ we may assume that $\cE\simeq \iota_*\iota^*\cE$. Then, by coherence, $\cE$ is equivalent to a finite dimensional $T$-representation over $p$. It is then a direct sum of elementary toric sheaves by the complete reducibility theorem, and the result follows in that case. Assuming the result for toric torsion sheaves of supports of dimension less or equal to $d-1$, we consider $\cE\in\Coh^T(X)$ and assume that it is torsion, with $\dim(\Supp(\cE))=d$. Using the torsion filtration for $\cE$, we can assume further that $\cE$ is pure. Then, we proceed by induction on the number of irreducible components for $\Supp(\cE)$. Let $V\subset \Supp(\cE)$ be such a component, and denote by $\iota : V\to X$ the inclusion. Then we have a short exact sequence :
 \[
 0\to \cH^0_V(\cE)\to \cE \to\cQ \to 0
 \]
 where $\cH^0_V(\cE)$ is the maximal coherent subsheaf supported on $V$. By maximality, $\cH^0_V(\cE)$ is unique, which implies by invariance of $V$ that it is toric. Then, the quotient $\cQ$ is either zero, or a toric pure sheaf of dimension $d$ whose support has less irreducible components than the one of $\cE$. By induction, we can restrict to the case when $\cE$ is pure with irreducible support. We then have a sequence of coherent toric sheaves
 \[
0\to \cI_V\cdot\cE\to\cE\to\iota_*\iota^*\cE\to 0
 \]
 supported on $V=\Supp(\cE)$. As the multiplicity of $V$ in $\cI_V\cdot \cE$ is strictly smaller than in $\cE$, by induction on the multiplicity, we may assume $\cE\simeq\iota_*\iota^*\cE$, that is the support of $\cE$ is reduced.  We then consider $\cE_{\vert V}$. This admits a short exact sequence (as toric sheaves on $V$) :
 \[
 0\to \cE_{\vert V} \to \cE_{\vert V}^{\vee\vee} \to \cQ' \to 0
 \]
 where this time the quotient $\cQ'$ is a torsion toric sheaf  on $V$. By our induction hypothesis on the dimension of the support, $\iota_*\cQ'\in \mathfrak{A}$, and thus we can restrict to the case when $\cE_{\vert V}$ is reflexive. Finally, we argue by induction on the rank. If the rank $r$ of $\cE_{\vert V}$ is one, then $\cE$ is an elementary toric sheaf, and thus $\cE\in\mathfrak{A}$. We can always find a saturated rank $1$ toric subsheaf $\cL$ of $\cE_{\vert V}$. Indeed, if $(E^\sigma_m)$ is the family of multifiltrations for $\cE_{\vert V}$, with $E=\displaystyle\lim_{\to} E^\sigma_m$, then the family of multifiltrations $(L\cap E^\sigma_m)$, for $L\subset E$ a line, corresponds to such a saturated subsheaf $\cL\subset \cE_{\vert V}$ (see e.g. \cite[Lemma 2.15]{NaTip}). The sheaf $\cL$ is reflexive on $V$, hence elementary on $X$, and the quotient $\cQ_\cL$ of $\cE_{\vert V}$ by $\cL$ is torsion-free, of rank $r-1$. Reducing again to its reflexive hull $\cQ_\cL^{\vee\vee}$ (the quotient is again a torsion sheaf on $V$), this concludes the proof in that case by induction on the rank.
 
 We move on to the torsion-free case. Assume  that $\cE$ is torsion-free. Then it lies in the canonical sequence
 \[
 0\to \cE \to \cE^{\vee\vee} \to \cQ \to 0,
 \]
 where $\cQ$ is a toric torsion sheaf. By the previous case, $\cQ\in \mathfrak{A}$. Hence, we reduced to the reflexive case, and a similar argument as before on the rank concludes the proof of the theorem.
\end{proof}
\begin{remark}
 By adapting the proof of Theorem \ref{theointro:decomposition theorem}, one can show that any equivariant injection $\cE\to \cF$ between two torsion-free toric sheaves of the same rank factorizes as
 \[
\cE\to \cE_1\to\ldots\to \cE_r=\cF
 \]
 where each quotient $\cQ_i:=\cE_{i}/\cE_{i-1}$ is a pure sheaf with irreducible support, whose restriction to its support is (torsion-free) of rank $1$. This result was proved in \cite[Theorem 1.2]{Tipler25} (under the extra assumption that $X$ is smooth) with a different approach using Perling's families of multifiltrations. The injections $\cE_i\to\cE_{i+1}$ are called \textit{elementary injections} in \cite{Tipler25}. Be careful though that the quotients $\cQ_i$ correspond to elementary toric sheaves in the terminology of the present paper only when $(\cQ_i)_{\vert \Supp(\cQ_i)}$ is reflexive, that is when the injection $\cE_i\to\cE_{i+1}$ is \textit{saturated} (cf \cite[Definition 4.9]{Tipler25}). The proof presented in this paper provides a more conceptual and geometric explanation to the factorisation result \cite[Theorem 1.2]{Tipler25}.
\end{remark}
Corollary \ref{corintro:decomposition theorem} follows directly from the proof of Theorem \ref{theointro:decomposition theorem}.
\begin{proof}[Proof of Corollary \ref{corintro:decomposition theorem smooth case}]
By Corollary \ref{corintro:decomposition theorem}, it is enough to show that the property $P$ is satisfied by all the elementary toric sheaves on $X$. Let $\cE\in\Coh^T(X)$ be elementary. By Lemma \ref{lem:elementary toric sheaf geometric}, we may assume that
\[
\cE\simeq (\iota_*\cO_{V}\otimes_{\cO_X}\cL)[m],
\]
where $\iota:V\to X$ is the inclusion of some orbit closure $V=V(\tau)$ for $\tau\in\Sigma$, $\cL$ is a toric line bundle on $X$, and $m\in M$. We will assume $m=0$, which will not affect the argument. The key fact is that $\iota_*\cO_{V}$ admits a Koszul resolution by direct sums of toric line bundles. Indeed, the orbit-cone correspondence gives
\[
V(\tau)=\bigcap_{\rho\in\tau(1)}\, D_\rho.
\]
Then, we can introduce the toric vector bundle of rank $r=\codim V$ :
\[
 \cF=\bigoplus_{\rho\in\tau(1)}\, \cO_X(D_\rho)
\]
and find a section $s=(s_\rho)_{\rho\in \tau(1)}\in H^0(X,\cF)$, with $(s_\rho)$ defining sections for the $D_\rho$'s, such that $V(\tau)=\lbrace s= 0 \rbrace$. Convolution by $s$ induces the Koszul complex
\[
 0\to \Lambda^r\cF^\vee \to \ldots \to \Lambda^2\cF^\vee \to\cF^\vee\to \cO_X\to \iota_*\cO_V\to 0
\]
which corresponds to the toric locally free resolution of $\iota_*\cO_V$ :
\[
 0\to \cO_X(-\sum_{\rho\in\tau(1)}\, D_\rho) \to \ldots \to \bigoplus_{\sigma\in\tau(2)}\, \cO_X(-\sum_{\rho\in\sigma(1)}\, D_\rho)\to\bigoplus_{\rho\in\tau(1)}\, \cO_X(-D_\rho) \to \cO_X\to \iota_*\cO_V\to 0.
\]
Tensoring by $\cL$ gives a toric resolution of $\cE$ by direct sums of toric line bundles. As the property $P$ is satisfied by toric line bundles, and preserved in short exact sequences, a simple induction shows that it is satisfied by each term in this resolution of $\cE$, and then by $\cE$ itself. The result follows.
\end{proof}

\section{Application : cohomology}
\label{sec:cohomology}

We consider in this section a normal and complete toric variety $X$ with fan $\Sigma$. We also fix $\E:=(E^\Sigma,\eta^\Sigma)$ (also denoted sometimes $(E^\sigma_m)$) to be a $\Sigma$-family associated to  a quasicoherent $T$-sheaf $\cE$ over $X$.
\subsection{Cohomology of $\Sigma$-families}
By considering the \v{C}ech complex associated to the affine cover of $X$ by the $U_\sigma$'s, for $\sigma\in\Sigma(n)$ running through maximal dimensional cones, one obtains that the cohomology groups $H^i(X,\cE)$ are $M$-graded. We introduce now a complex whose cohomology will  compute each graded piece.
As in \cite{Brion97Tohoku,BressLunts}, we fix an orientation on each cone $\sigma\in\Sigma$ and signs $\varepsilon_{\tau\sigma}$ for each pair $\tau\leq\sigma$, $\dim\sigma=\dim\tau+1$, with $\varepsilon_{\tau\sigma}= +1$ when the orientation of $\sigma$ induces the orientation of $\tau$ by restriction, and $\varepsilon_{\tau\sigma}= -1$ otherwise. We then define $(C^\bullet(\Sigma, \E)_m, \del_m^\bullet)$ for each $m\in M$ as follows. For each $0\leq i\leq n$, set the cochains to be
\[
C^i(\Sigma,\E)_m:=\bigoplus_{\sigma\in\Sigma(n-i)}\, E^\sigma_m.
\]
The differentials are given, for $0\leq i\leq n-1$, by
\begin{equation}
 \label{eq:diff Klyachko Cech complex}
\begin{array}{cccc}
 \del^i_m : & C^i(\Sigma,\E)_m &\to & C^{i+1}(\Sigma,\E)_m\\
            & (s_\sigma)_{\sigma\in\Sigma(n-i)} & \mapsto & \displaystyle\sum_{\tau\in\Sigma(n-i-1)} \sum_{\substack{\tau\leq\sigma\\\sigma\in \Sigma(n-i)}}\, \varepsilon_{\tau\sigma} \, \eta_{\tau\sigma}\circ\iota^*_{\tau\sigma}s_\sigma
\end{array}
\end{equation}
and by 
\[\del^n_m : C^n(\Sigma,\E)_m\to 0,\]
and 
\[\del^{-1}_m: 0 \to C^0(\Sigma,\E)_m,\]
where recall that $\eta_{\tau\sigma}\circ\iota^*_{\tau\sigma}: E^\sigma_m\to E^\tau_m$ is induced by the restriction map, cf Definition \ref{def:Sigma families}.
We leave to the reader the verification that $\del^{i+1}_m\circ\del^i_m=0$. Following \cite{AlHoWi}, we will call the complex $(C^\bullet(\Sigma,\E)_m,\del^\bullet_m)$ the $m$-th Klyachko--\v{C}ech complex. We can introduce the associated cohomology groups
\[
H^i(\Sigma,\E)_m:=\frac{\ker \del^i_m}{\Ima\, \del^{i-1}_m}.
\]
Our goal is now to prove the following result, due to Klyachko in the locally free case, over a smooth base \cite{Kly90}.
\begin{theorem}
 \label{theo:computing cohomology of toric sheaves}
 Assume that $\cE$ is a coherent toric sheaf over a normal and complete toric variety $X$. Let $\E=(E^\Sigma,\eta^\Sigma)$ be the associated $\Sigma$-family. Then,
 \[
 H^i(X,\cE)\simeq \bigoplus_{m\in M} H^i(\Sigma,\E)_m.
 \]
\end{theorem}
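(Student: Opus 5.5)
The plan is to apply Corollary \ref{corintro:decomposition theorem} to the property $P(\cE)$: ``for every $i$ and every $m\in M$, the natural map $H^i(\Sigma,\E)_m\to H^i(X,\cE)_m$ is an isomorphism''. Before doing so, this map has to be made natural in $\cE$.

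\textbf{Step 1: a natural comparison map.} The \v{C}ech complex of the cover $\{U_\sigma\}_{\sigma\in\Sigma(n)}$ is $M$-graded. Its degree-$m$ part in degree $i$ is a sum over $(i+1)$-tuples of maximal cones, indexed by their intersections $U_{\sigma_0\cap\dots\cap\sigma_i}$. The Klyachko--\v{C}ech complex is indexed instead by cones of codimension $i$.

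I would compare both complexes to a third, intermediate complex. One option is a Čech complex for the cover by all $U_\sigma$ with the ordered nerve replaced by the face poset. Another is to interpret $C^\bullet(\Sigma,\E)_m$ as computing sheaf cohomology on the finite poset $\Sigma$, with the Alexandrov topology whose opens are subfans. The pushforward $\pi_*\cE$ to this poset along $X\to\Sigma$, $x\mapsto$ cone of its orbit, has sections $E^\sigma$ on stars. Since each $U_\sigma$ is affine, $H^i(X,\cE)_m=H^i(\Sigma,(\pi_*\cE)_m)$.

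On a poset where $\Sigma$ is the fan of a complete variety, the cellular cochain complex of the sphere $S^{n-1}$ with coefficients in a cellular sheaf computes this cohomology up to a shift and a contribution from the cone $\{0\}$. This is how $C^\bullet(\Sigma,\E)_m$ should be read. It yields a map $H^i(\Sigma,\E)_m\to H^i(X,\cE)_m$ that is natural in morphisms of $\Sigma$-families.

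\textbf{Step 2: preservation in short exact sequences.} Both sides are exact functors of $\cE$ at the cochain level. For $C^\bullet(\Sigma,\E)_m$ this holds because a short exact sequence of toric sheaves gives short exact sequences $0\to E^\sigma_m\to F^\sigma_m\to G^\sigma_m\to0$, since each $U_\sigma$ is affine. We therefore get two long exact sequences connected by the natural maps, with commuting connecting homomorphisms. By the five lemma, if $P$ holds for two of the three sheaves then the comparison maps are isomorphisms for the third as well. For the ``two out of three'' cases where the middle sheaf is unknown, the five lemma applies directly. For the outer sheaves, I would use the version of the five lemma that applies to mono and epi separately, running through all degrees $i$. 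So $P$ is preserved in short exact sequences.

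\textbf{Step 3: elementary sheaves.} By Corollary \ref{corintro:decomposition theorem}, it remains to treat $\cE=\iota_*\cL$ with $\iota:V(\tau)\to X$ and $\cL$ rank one reflexive on $V(\tau)$. Here $H^i(X,\cE)=H^i(V(\tau),\cL)$, since $\iota$ is affine.

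On the combinatorial side, $E^\sigma_m=0$ unless $\tau\le\sigma$. For $\sigma\geq\tau$, $E^\sigma_m$ equals the multifiltration of $\cL$ for the corresponding cone of the quotient fan $\Sigma_V$, a complete fan in $N_\R/\mathrm{Span}(\tau)$. Hence $C^\bullet(\Sigma,\E)_m=C^\bullet(\Sigma_V,\mathbb L)_m$ up to reindexing, where $\mathbb L$ is the $\Sigma_V$-family of $\cL$. This reduces the problem to a rank one reflexive sheaf $\cO(\sum a_\rho D_\rho)$ on a complete normal toric variety.

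By Proposition \ref{prop:family of filtrations rank one reflexive sheaves}, $L^\sigma_m$ is $\K$ exactly on the cones whose rays all satisfy $\langle m,u_\rho\rangle\ge -a_\rho$, and $0$ otherwise. Thus $C^\bullet(\Sigma,\mathbb L)_m$ is the relative cellular cochain complex of a closed subcomplex (a union of cones) of the cell decomposition of $S^{n-1}$ induced by $\Sigma$, coned off at $\{0\}$. Meanwhile $H^i(X,\cO(D))_m$ is computed by the known formula as the reduced cohomology $\tilde H^{i-1}$ of the set $\{u\in N_\R: \langle m,u\rangle<-\psi_D(u)\}$, intersected with the sphere (cf.\ \cite[Theorem 9.1.3]{CLS}). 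This is the sign-reversed, open complement counterpart. Matching the two requires a homotopy equivalence between this open subset of $S^{n-1}$ and the complement of the subcomplex spanned by the ``good'' cones. The degree shift and the role of $\{0\}$ then make relative cellular cohomology agree with reduced cohomology of the complement. If that formula is unavailable in the non-simplicial case, I would instead prove it directly via the \v{C}ech complex, which is again a cellular computation.

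\textbf{Main obstacle.} I expect the main difficulty to be this last identification for non-simplicial fans, where the set of good rays does not determine a subcomplex by a naive ``all rays good'' rule. A close second is constructing the comparison map in Step 1 naturally, so that the five lemma applies. For Step 1, the first thing I would try is the poset-sheaf/cellular-sheaf argument.
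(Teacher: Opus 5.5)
You have the paper's overall architecture: Corollary~\ref{corintro:decomposition theorem}, stability under short exact sequences, reduction to $\cO_X(\sum a_\rho D_\rho)$ on $V$, and a cellular computation on the sphere. However, the property $P$ you choose to propagate opens a gap that the proposal does not close. $P$ refers to a \emph{specific} natural map $H^i(\Sigma,\E)_m\to H^i(X,\cE)_m$, compatible with connecting homomorphisms, and Step 1 never constructs it. Moreover, the assertion made there, that $C^\bullet(\Sigma,\E)_m$ computes the cohomology of $(\pi_*\cE)_m$ on the poset $\Sigma$, would already be the whole theorem once made precise and proved, so it cannot serve merely as a source of a map. More seriously, Step 3 does not verify $P$ for elementary sheaves. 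You compute $H^i(X,\cO_X(D))_m$ via \cite[Theorem 9.1.3]{CLS}, compute $H^i(\Sigma,\LL)_m$ cellularly, and match them through a homotopy equivalence or duality. This only gives an abstract isomorphism, i.e.\ equal dimensions, not that your comparison map is an isomorphism. The five lemma needs the latter, and ``equal dimensions'' is not itself preserved in short exact sequences, so Corollary~\ref{corintro:decomposition theorem} cannot be applied to it.

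The missing idea is to lift $C^\bullet(\Sigma,\E)_m$ to a complex of sheaves, $\cC^i(\Sigma,\cE)=\bigoplus_{\sigma\in\Sigma(n-i)}(\iota_\sigma)_*(\cE_{\vert U_\sigma})$, augmented by the restriction maps $\cE\to\cC^0(\Sigma,\cE)$. Each term is $\Gamma$-acyclic because $\iota_\sigma$ is affine, and its weight-$m$ global sections are exactly $C^\bullet(\Sigma,\E)_m$. So it suffices to prove that $0\to\cE\to\cC^\bullet(\Sigma,\cE)$ is exact. Exactness is a property of a single complex and needs no comparison map. It is preserved in short exact sequences by the long exact cohomology sequence of the induced short exact sequence of complexes. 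For an elementary sheaf one reduces to $V$ and then to $\cL=\cO_X(\sum a_\rho D_\rho)$, and checks exactness locally. On $U_\tau$ in weight $m$, the complex is $L^\tau_m$ followed by the augmented cellular chain complex of the subfan of cones containing no ray of $\tau_m=\{\rho\in\tau(1):\langle m,u_\rho\rangle<-a_\rho\}$. If $\tau_m=\varnothing$, this subfan is all of $\Sigma$, covering $S^{n-1}$, and $L^\tau_m=\K$ kills the top class. Otherwise $L^\tau_m=0$ and the subcomplex is acyclic. This local check replaces your global matching with \cite[Theorem 9.1.3]{CLS}. It also removes your non-simplicial worry: by the description of $(L^\sigma_m)$ for rank one reflexive sheaves, the cones with $L^\sigma_m\neq 0$ always form a subfan, and no support function is needed.
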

We first present some corollaries, following \cite{Kly90}.  For each $m\in M$, set the weight-$m$ Euler characteristic :
\[
\chi(\cE)_m:=\sum_{i=0}^n\, (-1)^i \dim\big(H^i(X,\cE)_m\big).
\]
\begin{corollary}
 \label{cor:weighted euler characteristic from cohomology}
 Assume that $\cE$ is a coherent toric sheaf over a normal and complete toric variety $X$, with associated $\Sigma$-family $\E=(E^\sigma_m)$. Then for each $m\in M$,
 \[\chi(\cE)_m= \sum_{\sigma\in\Sigma}\, (-1)^{\codim(\sigma)}\, \dim(E^\sigma_m).\]
\end{corollary}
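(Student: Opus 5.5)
This follows from Theorem \ref{theo:computing cohomology of toric sheaves} by an Euler characteristic argument for finite complexes. Fix $m\in M$. By Theorem \ref{theo:computing cohomology of toric sheaves}, and since the isomorphism is compatible with the $M$-gradings, we have $H^i(X,\cE)_m\simeq H^i(\Sigma,\E)_m$ for every $i$. Here I assume the isomorphism is graded, as the construction via the Klyachko--\v{C}ech complex indicates. So it suffices to compute the alternating sum of the dimensions of the cohomology of the $m$-th Klyachko--\v{C}ech complex $(C^\bullet(\Sigma,\E)_m,\del^\bullet_m)$.

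The first step is to check that this complex is a bounded complex of finite dimensional vector spaces. It is concentrated in degrees $0\le i\le n$. Each term $C^i(\Sigma,\E)_m=\bigoplus_{\sigma\in\Sigma(n-i)}E^\sigma_m$ is a finite direct sum, because $\Sigma$ is finite. Each summand is finite dimensional by condition (1) of Definition \ref{def:finite sigma families}, which applies since $\cE$ is coherent and its $\Sigma$-family is therefore finite by \cite[Proposition 5.11]{Per04}.

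The second step is the standard identity for a bounded complex of finite dimensional spaces:
\[
\sum_i(-1)^i\dim H^i=\sum_i(-1)^i\dim C^i.
\]
This follows from rank--nullity applied to each $\del^i_m$, since $\dim C^i=\dim\ker\del^i_m+\dim\Ima\,\del^i_m$ and $\dim H^i=\dim\ker\del^i_m-\dim\Ima\,\del^{i-1}_m$. The image terms then cancel in the alternating sum.

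Finally, a cone $\sigma\in\Sigma(n-i)$ has $\codim(\sigma)=i$. Hence
\[
\sum_i(-1)^i\dim C^i(\Sigma,\E)_m=\sum_{\sigma\in\Sigma}(-1)^{\codim\sigma}\dim E^\sigma_m,
\]
which is the claimed formula. I expect no real obstacle. The only points needing care are the finiteness of the $E^\sigma_m$ and the compatibility of the isomorphism in Theorem \ref{theo:computing cohomology of toric sheaves} with weight spaces.
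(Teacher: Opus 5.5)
Your proposal is correct and is exactly the intended argument. The paper states this corollary without a separate proof, as an immediate consequence of Theorem~\ref{theo:computing cohomology of toric sheaves} via the Euler--Poincar\'e identity for the finite complex $C^\bullet(\Sigma,\E)_m$, whose terms are finite dimensional because the $\Sigma$-family is finite. The grading compatibility you assume is built into that theorem's proof: $\cC^\bullet(\Sigma,\cE)$ is a resolution by toric sheaves with equivariant differentials, and its weight-$m$ global sections are precisely $C^\bullet(\Sigma,\E)_m$.
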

When $\cE$ is torsion-free, each vector space $E^\sigma_m$ can be seen as a subspace of the inductive limit \[E=\displaystyle\lim_{\to} E^\sigma_m.\]
\begin{corollary}
 \label{cor:H0 and Hn from complex family multifiltrations}
 Let $\cE$ be a torsion-free toric sheaf on a complete and normal toric variety $X$, with associated $\Sigma$-family $\E=(E^\sigma_m)$. Then for each $m\in M$,
 \[H^0(X,\cE)_m=\bigcap_{\sigma\in\Sigma(n)} \, E^\sigma_m\]
 and 
 \[
H^n(X,\cE)_m=E/\sum_{\rho\in\Sigma(1)}E^\rho_m.
\]
\end{corollary}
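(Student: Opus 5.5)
We will read both formulas directly off the Klyachko--\v{C}ech complex, using Theorem \ref{theo:computing cohomology of toric sheaves}. That theorem gives $H^0(X,\cE)_m\simeq H^0(\Sigma,\E)_m=\ker\del^0_m$ and $H^n(X,\cE)_m\simeq H^n(\Sigma,\E)_m=\mathrm{coker}\,\del^{n-1}_m$. The degree-$m$ component of the isomorphism is legitimate because the \v{C}ech complex of the $T$-invariant cover splits by weights. Since $\cE$ is torsion-free, we work with the family of multifiltrations. Every $E^\sigma_m$ is then a subspace of $E$, and every map $\eta_{\tau\sigma}\circ\iota^*_{\tau\sigma}$ becomes the inclusion $E^\sigma_m\subset E^\tau_m$. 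So the whole computation reduces to combinatorics of the fan, with all maps being inclusions into $E$.

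\emph{The $H^n$ part.} We have $C^n(\Sigma,\E)_m=E^{\{0\}}_m$. For the zero cone, $U_{\{0\}}=T$ and $\leq_{\{0\}}$ identifies all weights, so the compatibility conditions give $E^{\{0\}}_m=E$ for every $m$. Next, $C^{n-1}(\Sigma,\E)_m=\bigoplus_{\rho\in\Sigma(1)}E^\rho_m$. With $\{0\}$ as the only face of each ray, $\del^{n-1}_m$ sends $(s_\rho)$ to $\sum_\rho\varepsilon_{\{0\}\rho}s_\rho$. Its image is therefore $\sum_\rho E^\rho_m$, since the signs do not affect the span. This gives $H^n(X,\cE)_m=E/\sum_{\rho}E^\rho_m$.

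\emph{The $H^0$ part.} An element of $C^0(\Sigma,\E)_m$ is a tuple $(s_\sigma)_{\sigma\in\Sigma(n)}$ with $s_\sigma\in E^\sigma_m\subset E$. Since $\del^0_m(s)$ has $\tau$-component $\sum_{\sigma\supset\tau}\varepsilon_{\tau\sigma}s_\sigma$ for each $\tau\in\Sigma(n-1)$, we have the following. Because $X$ is complete, $|\Sigma|=N_\R$ is a pseudomanifold, so each $\tau\in\Sigma(n-1)$ lies in exactly two maximal cones $\sigma,\sigma'$. The orientation of $N_\R$ induces orientations on $\sigma,\sigma'$ with opposite signs, $\varepsilon_{\tau\sigma}=-\varepsilon_{\tau\sigma'}$. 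We choose the orientations of maximal cones to be induced from a fixed orientation of $N_\R$; the cohomology does not depend on the sign choices, since changing an orientation only conjugates the complex by a sign. The $\tau$-component vanishes exactly when $s_\sigma=s_{\sigma'}$ in $E^\tau_m\subset E$. The dual graph of maximal cones, with adjacency through common facets, is connected, because completeness makes the cones cover the connected space $N_\R$ and removing codimension $\geq2$ cones keeps it connected. Hence the kernel consists of constant tuples $s_\sigma=s$ with $s\in\bigcap_{\sigma\in\Sigma(n)}E^\sigma_m$. This is the claimed $H^0$.

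The main obstacle is the sign and connectivity bookkeeping in the $H^0$ step: checking that each wall is shared by exactly two maximal cones with opposite incidence signs, and that the facet-adjacency graph is connected. If the sign argument becomes awkward, an alternative is to bypass it for $H^0$ altogether. Reflexivity is not needed for this: for torsion-free $\cE$, a global section of weight $m$ is a compatible collection of sections over the $U_\sigma$, $\sigma\in\Sigma(n)$. All of these sit inside $E$ via injective restriction to the torus, so compatibility is automatic and $H^0(X,\cE)_m=\bigcap_\sigma E^\sigma_m$ follows directly.
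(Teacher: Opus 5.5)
Your proposal is correct and follows the route the paper intends; the paper gives no separate proof, only that the corollary follows from Theorem~\ref{theo:computing cohomology of toric sheaves}. You view every $E^\sigma_m$ as a subspace of $E$ with restrictions as inclusions, then read $H^0$ as $\ker\del^0_m$ and $H^n$ as the cokernel of $\del^{n-1}_m$ into $E^{\{0\}}_m=E$. Your sign and wall-connectivity bookkeeping, and your direct sheaf-axiom argument for $H^0$ (which does not even need the theorem), fill in details the paper leaves to the reader.
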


\subsection{Proof of Theorem \ref{theo:computing cohomology of toric sheaves}} We will use a sheaf version of  $(C^\bullet(\Sigma,\E)_m,\del_m)$. We define, for each $0\leq i\leq n$ :
\[
\cC^i(\Sigma,\cE):=\bigoplus_{\sigma\in\Sigma(n-i)}\, (\iota_\sigma)_*(\cE_{\vert U_\sigma}),
\]
where $\iota_\sigma:U_\sigma\to X$ is the inclusion map. This is naturally a quasicoherent toric sheaf. More precisely, for each $\tau\in\Sigma$, and $m\in M$, we have for each $\sigma\in\Sigma$ :
\[
\begin{array}{ccc}
 \Gamma\big(U_\tau,(\iota_\sigma)_*(\cE_{\vert U_\sigma})\big)_m & = & \Gamma(U_\tau\cap U_\sigma,\cE_{\vert U_\sigma})_m \\
               &  = & \Gamma(U_\tau\cap U_\sigma,\,\cE)_m \\
               & =& E^{\tau\cap\sigma}_m,
\end{array}
\]
and then 
\begin{equation}
 \label{eq:Gamma Usigma acyclic resolution}
\Gamma(U_\tau,\cC^i(\Sigma,\cE))_m=\bigoplus_{\sigma\in\Sigma(n-i)}\, E^{\tau\cap\sigma}_m.
\end{equation}
We can define as in Equation \eqref{eq:diff Klyachko Cech complex} differentials $\del^i :  \cC^i(\Sigma,\cE) \to \cC^{i+1}(\Sigma,\cE)$, and
an injective morphism $\iota :\cE\to \cC^0(\Sigma,\cE)$ given by the direct sum over $\Sigma(n)$ of the restriction maps $\iota_\sigma$. We leave again to the reader checking that the following is a complex :
\begin{equation}
 \label{eq:acyclic resolution coherent toric sheaves}
0\to \cE \xrightarrow{\iota} \cC^0(\Sigma,\cE)\xrightarrow{\del^0} \cC^1(\Sigma,\cE)\xrightarrow{\del^1} \ldots  \xrightarrow{\del^{n-1}} \cC^n(\Sigma,\cE)\to 0.
\end{equation}
 To prove Theorem \ref{theo:computing cohomology of toric sheaves},  we will show that \eqref{eq:acyclic resolution coherent toric sheaves} is an acyclic resolution for $\cE$, assuming finiteness of the $\Sigma$-family $\E$. First, as $\cE$ is a sheaf, $\Ima\,\iota= \ker\, \del^0$.  We then have acyclicity :
\begin{lemma}
 \label{lem:acyclicity resolution of E}
  Let $\cE$ be a quasicoherent toric sheaf on $X$. For each $0\leq i\leq n$ and each $j\geq 1$, one has
 \[
 H^j(X,\cC^i(\Sigma,\cE))=0.
 \]
\end{lemma}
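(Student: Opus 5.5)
Since cohomology commutes with finite direct sums, it is enough to treat a single summand and show that
\[
H^j\big(X,(\iota_\sigma)_*(\cE_{\vert U_\sigma})\big)=0 \quad\text{for all } j\geq 1 \text{ and all } \sigma\in\Sigma .
\]
The plan is to combine two standard facts.

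\begin{enumerate}
\item $U_\sigma=\Spec(\K[S_\sigma])$ is affine. Hence the quasicoherent sheaf $\cE_{\vert U_\sigma}$ has vanishing higher cohomology on $U_\sigma$ (Serre's vanishing theorem for affine schemes).
\item The inclusion $\iota_\sigma:U_\sigma\to X$ is an affine morphism. $X$ is separated (it is a normal toric variety), and for each maximal cone $\sigma'\in\Sigma(n)$ we have $\iota_\sigma^{-1}(U_{\sigma'})=U_\sigma\cap U_{\sigma'}=U_{\sigma\cap\sigma'}$. This is again an affine toric chart, since $\sigma\cap\sigma'$ is a face of both cones. Because the $U_{\sigma'}$ cover $X$, the preimage of every member of an affine open cover is affine.
\end{enumerate}

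From (2) the higher direct images vanish: $R^q(\iota_\sigma)_*(\cE_{\vert U_\sigma})=0$ for $q\geq1$. Indeed, $R^q(\iota_\sigma)_*\cF$ is the sheaf associated to $V\mapsto H^q(\iota_\sigma^{-1}(V),\cF)$. Computing on the affine opens $V=U_{\sigma'}$, or on any smaller affine open, the preimage is affine, so these groups vanish by (1). The Leray spectral sequence for $\iota_\sigma$ then degenerates, giving
\[
H^j\big(X,(\iota_\sigma)_*(\cE_{\vert U_\sigma})\big)\simeq H^j(U_\sigma,\cE_{\vert U_\sigma})=0 \quad (j\geq1),
\]
again by (1).

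An alternative avoids spectral sequences. We compute cohomology by the \v{C}ech complex of the affine cover $\{U_{\sigma'}\}_{\sigma'\in\Sigma(n)}$ of $X$; this is legitimate because $X$ is separated, so all finite intersections $U_{\sigma'_0}\cap\dots\cap U_{\sigma'_p}=U_{\sigma'_0\cap\dots\cap\sigma'_p}$ are affine. For the sheaf $(\iota_\sigma)_*(\cE_{\vert U_\sigma})$, this \v{C}ech complex coincides with the \v{C}ech complex of $\cE_{\vert U_\sigma}$ with respect to the affine cover $\{U_\sigma\cap U_{\sigma'}\}$ of the affine scheme $U_\sigma$. The latter computes $H^\bullet(U_\sigma,\cE_{\vert U_\sigma})$, which vanishes in positive degrees.

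The only point requiring care, and the main obstacle, is checking that each $U_\sigma\cap U_{\sigma'}$ is affine, i.e.\ equal to $U_{\sigma\cap\sigma'}$. This is standard for fans: $\sigma\cap\sigma'$ is a common face, cut out in each cone by a separating hyperplane, so the intersection is the localisation of $U_\sigma$ at a single character. Note that no coherence or finiteness of the $\Sigma$-family is needed; quasicoherence suffices throughout, consistent with the statement.
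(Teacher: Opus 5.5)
Your proof is correct and essentially the paper's: your \v{C}ech alternative is exactly the paper's argument, which identifies $H^j(X,(\iota_\sigma)_*(\cE_{\vert U_\sigma}))$ with $H^j(U_\sigma,\cE_{\vert U_\sigma})$ via the \v{C}ech complex of the affine cover and then applies Serre vanishing. Your primary route, through affineness of $\iota_\sigma$ and the Leray spectral sequence, is just another standard way to obtain the same identification.
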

\begin{proof}
 It is  enough to show that for each $\sigma$, $(\iota_\sigma)_*(\cE_{\vert U_\sigma})$ has vanishing cohomology in degree greater or equal to one. By the \v{C}ech description of cohomology, one deduce that
 $H^j(X,(\iota_\sigma)_*(\cE_{\vert U_\sigma})\simeq H^j(U_\sigma,\iota_\sigma^*\cE)$, and the result follows from Serre vanishing Theorem \cite[Theorem III.3.7]{Hartshorne}.
\end{proof}
To prove exactness of the complex \eqref{eq:acyclic resolution coherent toric sheaves}, we will use Corollary \ref{corintro:decomposition theorem}.
In what follows, we let $(\cC^\bullet(\Sigma,\cE),\del^\bullet)$ to stand for the complex \eqref{eq:acyclic resolution coherent toric sheaves}.
\begin{lemma}
 \label{lem:exact seq toric sheaves gives exact seq acyclic resolution}
 Any exact sequence
 \[
 0\to\cE \to\cF\to\cG\to 0
 \]
  of quasicoherent toric sheaves induces an exact sequence of complexes :
  \begin{equation}
   \label{eq:exact sequence complexes of sheaves}
  0\to (\cC^\bullet(\Sigma,\cE),\del^\bullet) \to (\cC^\bullet(\Sigma,\cF),\del^\bullet) \to (\cC^\bullet(\Sigma,\cG),\del^\bullet) \to 0.
  \end{equation}
  Moreover, if two of the complexes $(\cC^\bullet(\Sigma,\cE),\del^\bullet)$, $(\cC^\bullet(\Sigma,\cF),\del^\bullet)$ or $(\cC^\bullet(\Sigma,\cG),\del^\bullet)$ are exact, so is the third one.
\end{lemma}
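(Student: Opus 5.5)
The plan is to check exactness of \eqref{eq:exact sequence complexes of sheaves} on sections over each invariant affine open $U_\tau$, one weight at a time, and then deduce the two-out-of-three statement from the long exact sequence in cohomology.

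\emph{Functoriality.} The complex $\cC^\bullet(\Sigma,-)$ is functorial in equivariant morphisms, including the augmentation $\iota$. The maps $\cE\to\cF\to\cG$ induce maps between the direct summands $(\iota_\sigma)_*(\cE_{\vert U_\sigma})$ termwise. These maps commute with the differentials, because the differentials are built from signed restriction maps, and restriction commutes with morphisms of sheaves. (In $\Sigma$-family language, this is the compatibility $\nu_{\tau\sigma}\circ\iota^*_{\tau\sigma}\phi^\sigma=\phi^\tau\circ\eta_{\tau\sigma}$ of Definition \ref{def:Sigma families}.)

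\emph{Termwise exactness.} It is enough to show that each sequence
\[0\to\cC^i(\Sigma,\cE)\to\cC^i(\Sigma,\cF)\to\cC^i(\Sigma,\cG)\to 0\]
is exact, and the augmentation degree $\cE\to\cF\to\cG$ is the given sequence itself. Exactness of quasicoherent sheaves can be tested on the affine cover $(U_\tau)$ by taking sections, since sections over affines are exact for quasicoherent sheaves. By \eqref{eq:Gamma Usigma acyclic resolution}, the weight-$m$ part of $\Gamma(U_\tau,\cC^i(\Sigma,\cE))$ is $\bigoplus_{\sigma\in\Sigma(n-i)}E^{\tau\cap\sigma}_m$. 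Since $U_{\tau\cap\sigma}=U_\tau\cap U_\sigma$ is affine, $\Gamma(U_{\tau\cap\sigma},-)$ is exact on quasicoherent sheaves. Taking weight spaces is also exact, as it is a direct summand decomposition of $T$-representations. Hence
\[0\to E^{\tau\cap\sigma}_m\to F^{\tau\cap\sigma}_m\to G^{\tau\cap\sigma}_m\to 0\]
is exact for every $\sigma$, $\tau$ and $m$. A direct sum of exact sequences is exact, so each termwise sequence is exact. One also needs $(\iota_\sigma)_*$ to preserve quasicoherence and exactness; this holds because $\iota_\sigma$ is an affine morphism, $X$ being separated.

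\emph{Two out of three.} A short exact sequence of complexes of sheaves gives a long exact sequence of cohomology sheaves
\[\cdots\to\cH^k(\cC^\bullet(\Sigma,\cE))\to\cH^k(\cC^\bullet(\Sigma,\cF))\to\cH^k(\cC^\bullet(\Sigma,\cG))\to\cH^{k+1}(\cC^\bullet(\Sigma,\cE))\to\cdots,\]
where the complexes include the augmentation term. If two of the three complexes are exact, their cohomology sheaves vanish in every degree. Each cohomology sheaf of the remaining complex then sits between two zeros in the long exact sequence, so it vanishes as well.

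The only step needing care is the exactness of $\Gamma(U_\tau\cap U_\sigma,-)$ together with its compatibility with the $M$-gradings, and both follow from affineness and the complete reducibility theorem. There is no real obstacle here.
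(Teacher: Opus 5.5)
Your proposal is correct and follows essentially the same route as the paper: exactness of the weight-$m$ sections $0\to E^{\tau\cap\sigma}_m\to F^{\tau\cap\sigma}_m\to G^{\tau\cap\sigma}_m\to 0$ on $U_{\tau\cap\sigma}$, then termwise exactness via the description \eqref{eq:Gamma Usigma acyclic resolution} of sections of $\cC^i$, compatibility with the differentials because morphisms commute with restriction maps, and the long exact sequence for the two-out-of-three statement. You additionally spell out justifications the paper leaves implicit: exactness of sections over affines, exactness of taking weight spaces, and quasicoherence and exactness of $(\iota_\sigma)_*$ because $\iota_\sigma$ is affine.
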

\begin{proof}
This is again straightforward. Denote by $(E^\sigma_m)$, $(F^\sigma_m)$ and $(G^\sigma_m)$ the $\Sigma$-families of $\cE$, $\cF$ and $\cG$, respectively.
For each $\sigma\in\Sigma$, $\tau\in\Sigma$ and $m\in M$, restriction to $U_{\tau\cap\sigma}$ yields exactness of 
\[
0\to E^{\tau\cap\sigma}_m \to F^{\tau\cap\sigma}_m \to G^{\tau\cap\sigma}_m\to 0.
\]
Then, by the description of the sections of $\cC^i(\Sigma,\cE)$, cf \eqref{eq:Gamma Usigma acyclic resolution}, we have induced exact sequences
\[
0\to \Gamma(U_\tau,\cC^i(\Sigma,\cE))_m\to \Gamma(U_\tau,\cC^i(\Sigma,\cF))_m\to \Gamma(U_\tau,\cC^i(\Sigma,\cG))_m\to 0.
\]
Commutativity of those sequences with the differentials $\del^\bullet$ follows from the fact that morphisms of sheaves commute with restriction maps, by definition. The  last statement follows by considering the long exact sequence associated to \eqref{eq:exact sequence complexes of sheaves}.
\end{proof}
The following proposition is then the key to prove Theorem \ref{theo:computing cohomology of toric sheaves}.
\begin{proposition}
 \label{prop:acyclic resolution for E}
 Assume that $X$ is normal and complete. Let $\cE$ be a coherent toric sheaf on $X$. Then the complex $(\cC^\bullet(\Sigma,\cE),\del^\bullet)$ is exact.
\end{proposition}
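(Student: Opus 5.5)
The plan is to apply Corollary \ref{corintro:decomposition theorem} to the property $P(\cE)$: ``the complex $(\cC^\bullet(\Sigma,\cE),\del^\bullet)$ is exact''. By Lemma \ref{lem:exact seq toric sheaves gives exact seq acyclic resolution}, $P$ is preserved in short exact sequences. It therefore suffices to check $P$ for elementary toric sheaves $\cE\simeq\iota_*\cL$, where $\iota:V=V(\tau)\to X$ is an orbit closure and $\cL$ is a rank one reflexive sheaf on $V$.

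Exactness can be checked on sections over each $U_\tau$ and in each weight $m$. By \eqref{eq:Gamma Usigma acyclic resolution}, we must show that for every $\tau'\in\Sigma$ and $m\in M$ the complex
\[
0\to E^{\tau'}_m\to\bigoplus_{\sigma\in\Sigma(n)}E^{\tau'\cap\sigma}_m\to\bigoplus_{\sigma\in\Sigma(n-1)}E^{\tau'\cap\sigma}_m\to\cdots
\]
is exact.

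First reduce to $V=X$. For $\cE=\iota_*\cL$, the spaces $E^\sigma_m$ vanish unless $\tau\leq\sigma$, since $U_\sigma\cap V=\emptyset$ otherwise. For $\tau\le\sigma$ they coincide with the $\sigma/\tau$-family of $\cL$ on $V$, in degrees $m\in\tau^\perp$ up to the shift of the equivariant structure. The cones containing $\tau$ form the star of $\tau$, which projects to the complete fan $\Sigma_V$ in $N_\R/\mathrm{Span}(\tau)$ with dimensions shifted by $\dim\tau$. The complex for $\cE$ on $X$ is then the complex for $\cL$ on $V$, reindexed, so it suffices to treat rank one reflexive sheaves $\cL=\cO_X(\sum a_\rho D_\rho)$ on a complete normal toric variety. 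I will keep the $[m]$ shift and the identification of weights in mind, but expect no difficulty there.

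For such $\cL$, Proposition \ref{prop:family of filtrations rank one reflexive sheaves} gives $L^\sigma_m=\K$ if $\langle m,u_\rho\rangle\ge -a_\rho$ for all $\rho\in\sigma(1)$, and $L^\sigma_m=0$ otherwise. For fixed $m$, the cones with $L^\sigma_m=\K$ form a subfan $\Sigma_m$, closed under taking faces. Fix $\tau'$. The cones $\tau'\cap\sigma$ are exactly the faces of $\tau'$, and each face $\gamma$ of $\tau'$ appears once for each maximal cone in the star of $\gamma$, i.e.\ with multiplicity. I would therefore regroup the complex as a sum over faces $\gamma\le\tau'$ with $\gamma\in\Sigma_m$: the contribution of $\gamma$ is a cellular cochain complex of the star of $\gamma$ intersected with the unit sphere, with $\K$ coefficients, relative to the part where $\tau'\cap\sigma\neq\gamma$.

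Cleaner is a filtration or spectral-sequence argument by the face $\gamma=\tau'\cap\sigma$. The complex becomes $\bigoplus_\gamma L^\gamma_m\otimes(\text{cochains of cones }\sigma\text{ with }\sigma\cap\tau'=\gamma)$, and the cones $\sigma$ with $\sigma\supset\gamma$ form a cellular decomposition of the sphere around $\gamma$. Exactness should then follow from an elementary homology computation: the cellular complex of a sphere, or of the open star of a cone in a complete fan, is acyclic apart from the augmentation, which the $E^{\tau'}_m$ term kills. This combinatorial step is the main obstacle. One must organise the cones $\sigma$ by $\tau'\cap\sigma$ and show the resulting pieces are acyclic. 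I would first attempt it by a direct geometric argument: the union of the relatively open cones $\sigma$ with $\tau'\cap\sigma=\gamma$, intersected with a small sphere, is contractible (locally the normal cone direction), and its compactly supported cellular cohomology therefore computes the right thing.

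Failing that, an alternative sidesteps the combinatorics entirely. Note that $U_{\tau'}\cap U_\sigma=U_{\tau'\cap\sigma}$, so restricting the complex to $U_{\tau'}$ gives the complex built from the cover of $U_{\tau'}$ by the affines $U_{\tau'\cap\sigma}$, indexed by maximal cones. This resembles a \v{C}ech complex, but it is indexed by single cones rather than intersections, so the \v{C}ech argument does not apply directly. I would still prefer the sphere-homology route, as the paper's introduction suggests.
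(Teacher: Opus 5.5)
Your reductions are exactly the paper's: Corollary \ref{corintro:decomposition theorem} with Lemma \ref{lem:exact seq toric sheaves gives exact seq acyclic resolution}, then passing from $\iota_*\cL$ to $\cL$ on $V(\tau)$. There is a genuine gap, however: the only step with real content, exactness of the weight-$m$ complex over $U_{\tau'}$ for $\cL=\cO_X(\sum a_\rho D_\rho)$, is left open, and the heuristic you offer for it is not valid.

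In your filtration by $\gamma=\tau'\cap\sigma$ there is a single augmentation term $L^{\tau'}_m$, and it lies in the graded piece $\gamma=\tau'$. So every piece with $\gamma<\tau'$ and $L^\gamma_m=\K$ must be acyclic on its own. These pieces therefore cannot be ``open stars, acyclic apart from the augmentation''. Contractibility does not decide the matter either. For $\tau'\neq\{0\}$, the cells with $\tau'\cap\sigma=\tau'$ fill the open star of $\tau'$, an open $(n-1)$-ball, which is contractible yet has compactly supported cohomology $\K$ in top degree.

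What is actually needed is the following. In the link sphere of $\gamma$, the piece for $\gamma<\tau'$ is the augmented cellular chain complex of the subcomplex of cells disjoint from the (nonempty) image of $\tau'$. Its complement is the union of the open stars of the rays of that image. These rays all lie in one cone, so the union is contractible by the nerve lemma, and Alexander duality then makes the subcomplex acyclic. Done this way your filtration would work. Since the graded pieces are $E^\gamma_m\otimes_\K$ (cellular complex of the cells with $\tau'\cap\sigma=\gamma$) for an arbitrary $\Sigma$-family, it would even give the proposition for all quasicoherent toric sheaves without the decomposition theorem. None of this is in your proposal, though.

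The paper avoids this bookkeeping through an observation you did not make. Since $(\tau'\cap\sigma)(1)=\tau'(1)\cap\sigma(1)$, Proposition \ref{prop:family of filtrations rank one reflexive sheaves} gives $L^{\tau'\cap\sigma}_m=\K$ exactly when $\sigma(1)\cap\tau'_m=\varnothing$, where $\tau'_m=\{\rho\in\tau'(1):\langle m,u_\rho\rangle<-a_\rho\}$. The maps between nonzero terms are identities. So the complex is literally $L^{\tau'}_m$ followed by the augmented cellular chain complex of $|\Sigma'|\cap S^{n-1}$, where $\Sigma'$ is the subfan of cones containing no ray of $\tau'_m$ (the paper's $\Sigma_{\hat\tau_m}$). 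There are two cases:
\begin{itemize}
\item If $\tau'_m=\varnothing$, this is all of $S^{n-1}$, and $L^{\tau'}_m=\K$ maps onto the fundamental class.
\item If $\tau'_m\neq\varnothing$, then $L^{\tau'}_m=0$, and $|\Sigma'|\cap S^{n-1}$ is acyclic: its complement is the union of the open stars of the rays of $\tau'_m$, all lying in the single cone $\tau'$.
\end{itemize}
The value of $L^{\tau'\cap\sigma}_m$ depends only on whether $\sigma$ meets $\tau'_m$, not on the face $\tau'\cap\sigma$ itself. Hence no grouping by $\gamma$, and no multiplicity count, is needed.
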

\begin{proof}
 Using Corollary \ref{corintro:decomposition theorem}, and Lemma \ref{lem:exact seq toric sheaves gives exact seq acyclic resolution}, it is enough to show the result when $\cE$ is an elementary toric sheaf. Then, there is a torus invariant subvariety $\iota:V\to X$, with fan $\Sigma_V$,  and a rank one reflexive sheaf $\cL$ on $V$, such that $\cE=\iota_*\cL$. We leave to the reader the proof of the following facts :
 \begin{itemize}
  \item For $i>\dim(V)$, $\cC^i(\Sigma,\iota_*\cL)=0$,
  \item For $i\leq \dim(V)$, $\iota^*\cC^i(\Sigma,\iota_*\cL)=\cC^i(\Sigma_V,\cL)$, and $\del^i\circ\iota^*=\iota^*\circ\del^i$,
  \item For each $i$, $\cC^i(\Sigma,\iota_*\cL)$ has support in $V$.
 \end{itemize}
 From those properties, one deduces that if the complex $(\cC^\bullet(\Sigma_V,\cL),\del^\bullet)$ is exact then  so is $(\cC^\bullet(\Sigma,\iota_*\cL),\del^\bullet)$. Hence, we can reduce ourselves to the case when $\cE=\cL$ is a rank one reflexive toric sheaf on $X$, and there exists integers $(a_\rho)_{\rho\in\Sigma(1)}\in\Z^{\vert\Sigma(1)\vert}$ such that
 \[
 \cL\simeq \cO_X\big(\sum_{\rho\in\Sigma(1)}\,a_\rho\,D_\rho\big).
 \]
 We set $(L^\sigma_m)$ to be a family of multifiltrations of $\cL$. By proposition \ref{prop:family of filtrations rank one reflexive sheaves}, for any $\sigma\in\Sigma$, and any weight $m\in M$, we may assume that
 \[
 L^\sigma_m=\left\{
 \begin{array}{ccc}
  \lbrace 0\rbrace & \mathrm{ if } & \exists\, \rho\in\sigma(1)\,,\: \langle m, u_\rho\rangle < -a_\rho\\
  \K & \mathrm{ if } & \forall\, \rho\in\sigma(1)\,,\: \langle m, u_\rho\rangle \geq -a_\rho.
 \end{array}
 \right.
 \]
 We wish to prove that $(\cC^\bullet(\Sigma,\cL),\del^\bullet)$ is an exact sequence of quasicoherent toric sheaves. By Theorem \ref{theorem:equiv categories Tsheaves and Sigma families}, it is enough to prove exactness at the level of $\Sigma$-families. This amounts to showing exactness, for each $\tau\in\Sigma$, and $m\in M$, of the complex $(\Gamma(U_\tau,\cC^\bullet(\Sigma,\cL))_m,\del^\bullet)$.
 By \eqref{eq:Gamma Usigma acyclic resolution}, this complex is
 \begin{equation}
 \label{eq:cohomology complex acyclic resolution rank one case}
 0\to L^\tau_m\to \bigoplus_{\sigma\in\Sigma(n)}\, L^{\tau\cap\sigma}_m \xrightarrow{\del_m^0}  \ldots  \xrightarrow{\del_m^{n-2}}\bigoplus_{\rho\in\Sigma(1)} L^{\tau\cap\rho}_m \xrightarrow{\del_m^{n-1}} \K \to 0
\end{equation}
with 
\[
(\del^i_m (s_\sigma))_{\sigma'}= \sum_{\substack{\sigma'\leq\sigma\\\sigma\in \Sigma(n-i)}}\, \varepsilon_{\sigma'\sigma} \, s_\sigma.
\]
 From the description of $(L^\sigma_m)$, the contribution of $L^{\tau\cap\sigma}_m$ to this latter complex will be non-zero precisely when for all $\rho\in\tau(1)\cap\sigma(1),\, \langle m, u_\rho\rangle \geq -a_\rho$. Let then $\Sigma_{\hat\tau_m}\subset\Sigma$ be the subset of cones $\sigma\in\Sigma$ such that for all $\rho\in \sigma(1)$, either $\rho\notin \tau(1)$, or $\rho\in\tau(1)$ and $\langle m, u_\rho \rangle \geq -a_\rho$. Equivalently, $\Sigma_{\hat\tau_m}$ is the largest subfan of $\Sigma$ that do not contain the set of rays  $\tau_m:=\lbrace \rho\in\tau(1)\:\vert\: \langle m, u_\rho\rangle < -a_\rho \rbrace\subset \tau(1)$. Then, we reduced ourselves to showing exactness of the complex
 \begin{equation}
  \label{eq:augmented homology proof of acyclic resolution}
 0\to L_m^{\tau_m}\to \bigoplus_{\sigma\in\Sigma_{\hat\tau_m}(n)}\, \K_\sigma \xrightarrow{\del_m^0}  \ldots \to\bigoplus_{\rho\in\Sigma_{\hat\tau_m}(1)} \K_\rho \xrightarrow{\del_m^{n-1}} \K \to 0,
 \end{equation}
 where each $\K_\sigma$ is a copy of $\K$. We recognize the augmented homology complex for the CW-complex given by $\Sigma_{\hat\tau_m}\cap S^{n-1}$, where $S^{n-1}\subset N_\R$ is the sphere given by a fixed euclidean structure on $N_\R$. We further note that either $\tau_m=\varnothing$, and then $\vert\Sigma_{\hat\tau_m}\vert\cap S^{n-1}=S^{n-1}$, or $\tau_m\neq 0$, and then  $\vert\Sigma_{\hat\tau_m}\vert\cap S^{n-1}$ is contractible. Indeed, for each cone $\sigma\in\Sigma$ such that $\sigma(1)\cap\tau_m\neq\varnothing$, the cell $\sigma\cap S^{n-1}$ admits a deformation retract to a point in $\tau_m\cap S^{n-1}$, and $\tau\cap S^{n-1}$ is contractible.  In both cases, this proves exactness of \eqref{eq:augmented homology proof of acyclic resolution}, which concludes the proof.
\end{proof}

We can now conclude the proof of Theorem \ref{theo:computing cohomology of toric sheaves}. Indeed, by Proposition \ref{prop:acyclic resolution for E}, $(\cC^\bullet(\Sigma,\cE),\del^\bullet)$ is exact. By Lemma \ref{lem:acyclicity resolution of E}, for each $0\leq i\leq n$, $\cC^i(\Sigma,\cE)$ have vanishing cohomology in positive degrees. Then, it is a standard fact that the global sections of an acyclic resolution computes cohomology for a given sheaf. The result then follows by construction of $\cC^\bullet(\Sigma,\cE)$.

\begin{remark}
\label{rem:quasicoherent extension of cohomology for toric sheaves}
One can extend further Theorem \ref{theo:computing cohomology of toric sheaves} to the quasicoherent case, by following the results in \cite[Section 3]{Huang}.  Another proof can also be obtained in the simplicial case by using a little more algebra, the \textit{Canonical \v{C}ech complex} of \cite[Section 6, Example 6.6]{Miller}, together with Batyrev and Cox presentation of toric sheaves \cite{BaCo}. We refer the
interested reader to \cite[Proposition 1.3.14]{Molto}. Finally, we point out that the proof of Theorem \ref{theo:computing cohomology of toric sheaves} does not reduce to a simple \v{C}ech cohomology computation, as claimed in \cite{KnuSha98}, unless $X$ is of Picard rank one.
\end{remark}

 \subsection{Alexander and Serre dualities}
\label{sec:alexander and serre dualities}
We finish this section by explaining how Alexander duality implies Serre duality for toric line bundles, assuming $X$ to be smooth. Let
\[
\cL=\cO_X\big(\sum_{\rho\in\Sigma(1)}\,a_\rho\,D_\rho\big)
\]
be a toric line bundle, for some integers $(a_\rho)_{\rho\in\Sigma(1)}\in\Z^{\vert\Sigma(1)\vert}$, with family of multifiltrations
$\LL=(L^\sigma_m)$.
Set $\check\cL=\cL^\vee\otimes \cK_X$, and set $\check \LL=(\check L^\sigma_m)$ be the associated family of multifiltrations. Using \cite[Section 2.1]{DDK20}, one obtains
\[
 \check L^\sigma_{-m}=\left\{
 \begin{array}{ccc}
  \lbrace 0\rbrace & \mathrm{ if } & \exists\, \rho\in\sigma(1)\,,\: \langle m, u_\rho\rangle \geq -a_\rho\\
  \K & \mathrm{ if } & \forall\, \rho\in\sigma(1)\,,\: \langle m, u_\rho\rangle < -a_\rho.
 \end{array}
 \right.
\]
We then introduce the subfans of $\Sigma$ :
\[
\Sigma_{\cL,m}=\lbrace \sigma\in\Sigma\:\vert\: \forall\, \rho\in\sigma(1),\, \langle m, u_\rho\rangle \geq -a_\rho \rbrace
\]
and 
\[
\Sigma_{\check\cL,-m}=\lbrace \sigma\in\Sigma\:\vert\: \forall\, \rho\in\sigma(1),\, \langle m, u_\rho\rangle < -a_\rho \rbrace.
\]
From the proof of Proposition \ref{prop:acyclic resolution for E}, the complex $(C^\bullet(\Sigma,\check \LL)_{-m},\del^\bullet_{-m})$ computes the reduced homology of $\check K:=S^{n-1}\cap \vert \Sigma_{\check\cL,-m}\vert$. On the other hand, it is straightforward to check that the cohomology of the dual complex of $(C^\bullet(\Sigma,\LL)_m,\del^\bullet_m)$ computes the reduced cohomology of  $K:=S^{n-1}\cap \vert \Sigma_{\cL,m}\vert$. As the rays of $\Sigma$ all belong to $\Sigma_{\check\cL,-m}(1)\cup \Sigma_{\cL,m}(1)$, the space $K$ is a deformation retract of $S^{n-1}\setminus \check K$. Hence, the reduced (co)homology of $K$ is isomorphic to the one of $S^{n-1}\setminus \check K$. By Alexander duality, we deduce that the $i$-th cohomology for the dual complex of $(C^\bullet(\Sigma,\LL)_m,\del^\bullet_m)$ is isomorphic to the $(n-i)$-th cohomology for $(C^\bullet(\Sigma,\check \LL)_{-m},\del^\bullet_{-m})$, recovering Serre duality in degrees $1\leq i\leq n-1$. In general, Serre duality for locally free toric sheaves may be viewed as a version of Alexander duality with local coefficient systems.

\bibliography{Toric}

@misc{AlHoWi,
 author = {Altmann, Klaus and Hochenegger, Andreas and Witt, Frederik},
 title = {Toric sheaves and polyhedra},
 year = {2024},
 howpublished = {Preprint, {arXiv}:2412.03476 [math.{AG}] (2024)},
 url = {https://arxiv.org/abs/2412.03476},
 arXiv = {arXiv:2412.03476}
}

@article{BaCo,
 author = {Batyrev, Victor V. and Cox, David A.},
 title = {On the {Hodge} structure of projective hypersurfaces in toric varieties},
 fjournal = {Duke Mathematical Journal},
 journal = {Duke Math. J.},
 issn = {0012-7094},
 volume = {75},
 number = {2},
 pages = {293--338},
 year = {1994},
 language = {English},
 doi = {10.1215/S0012-7094-94-07509-1},
 zbMATH = {706690},
 Zbl = {0851.14021}
}

@article{BressLunts,
 author = {Bressler, Paul and Lunts, Valery A.},
 title = {Intersection cohomology on nonrational polytopes},
 fjournal = {Compositio Mathematica},
 journal = {Compos. Math.},
 issn = {0010-437X},
 volume = {135},
 number = {3},
 pages = {245--278},
 year = {2003},
 language = {English},
 doi = {10.1023/A:1022232232018},
 zbMATH = {1901512},
 Zbl = {1024.52005}
}

@article{Brion97Tohoku,
 author = {Brion, Michel},
 title = {The structure of the polytope algebra},
 fjournal = {T{\^o}hoku Mathematical Journal. Second Series},
 journal = {T{\^o}hoku Math. J. (2)},
 issn = {0040-8735},
 volume = {49},
 number = {1},
 pages = {1--32},
 year = {1997},
 language = {English},
 zbMATH = {1036420},
 Zbl = {0881.52008}
}

@book{CLS,
  title = {Toric Varieties},
  author = {Cox, D. and Little, J. and Schenck, H.},
  isbn = {9780821884263},
  series = {Graduate studies in mathematics},
  year = {2011},
  publisher = {American Mathematical Soc.}
}

@article{DDK20,
  title = {Stabiliy of equivariant vector bundle over toric varieties},
  author = {Dasgupta, J. and Dey, A. and Khan, B.},
  journal = {Documenta Mathematica},
  volume = {25},
  pages = {~1787-1833},
  year = {2020}
}

@book{Hartshorne,
Author = {Hartshorne, R.},
Title = {Algebraic geometry},
Series = {Graduate Texts in Mathematics},
number = {52.},
publisher = {New York-Heidelberg-Berlin: Springer-Verlag.},
pages = {496 p.},
year ={(1977).}
}

@article{Har80,
  title = {Stable Reflexive Sheaves},
  author = {Hartshorne, R.},
  year = {1980},
  pages = {~121 - 176},
  volume = {254},
  journal = {Mathematische Annalen}
}

@book{HuLe, 
 place={Cambridge}, 
 edition={2}, 
 series={Cambridge Mathematical Library},
  title={The Geometry of Moduli Spaces of Sheaves}, 
  DOI={10.1017/CBO9780511711985}, 
  publisher={Cambridge University Press}, 
  author={Huybrechts, D. and Lehn, M.}, year={2010}, 
  collection={Cambridge Mathematical Library}
  }

@inbook{Huang,
author = {Huang, Mu-Wan},
title = {Fan Cohomology and Equivariant Chow Rings of Toric Varieties},
year = {2009},
publisher = {University of Nebraska - Lincoln},
series = {Department of
Mathematics: Dissertations, Theses, and Student Research}
}

@article{Kly90,
  title = {Equivariant bundle on toral varieties},
  author = {Klyachko, A.},
  year = {1990},
  pages = {~337 - 375},
  volume = {35},
  number = {2},
  journal = {Mathematics of the USSR-Izvestiya}
}

@Article{KnuSha98,
 Author = {Knutson, A. and Sharpe, E.},
 Title = {Sheaves on toric varieties for physics},
 FJournal = {Advances in Theoretical and Mathematical Physics},
 Journal = {Adv. Theor. Math. Phys.},
 ISSN = {1095-0761},
 Volume = {2},
 Number = {4},
 Pages = {873--961},
 Year = {1998},
 Language = {English},
 DOI = {10.4310/ATMP.1998.v2.n4.a6},
 zbMATH = {1324687},
 Zbl = {0943.14021}
}

@article{Miller,
 author = {Miller, Ezra},
 title = {The {Alexander} duality functors and local duality with monomial support},
 fjournal = {Journal of Algebra},
 journal = {J. Algebra},
 issn = {0021-8693},
 volume = {231},
 number = {1},
 pages = {180--234},
 year = {2000},
 language = {English},
 doi = {10.1006/jabr.2000.8359},
 zbMATH = {1525271},
 Zbl = {0968.13009}
}

@article{Molto, 
title = {Vector bundles and sheaves on toric varieties}, 
author = {Salat Moltó, Martí}, 
year = {2022}, 
publisher = {Universitat de Barcelona}, 
url = {http://hdl.handle.net/10803/688999} }

@Article{NaTip,
 Author = {Napame, A. and Tipler, C.},
 Title = {Toric sheaves, stability and fibrations},
 FJournal = {Journal of Pure and Applied Algebra},
 Journal = {J. Pure Appl. Algebra},
 ISSN = {0022-4049},
 Volume = {228},
 Number = {3},
 Pages = {30},
 Note = {Id/No 107494},
 Year = {2024},
 Language = {English},
 DOI = {10.1016/j.jpaa.2023.107494},
 zbMATH = {7755438},
 Zbl = {1527.14103}
}

@article{Payne09,
 author = {Payne, Sam},
 title = {Toric vector bundles, branched covers of fans, and the resolution property},
 fjournal = {Journal of Algebraic Geometry},
 journal = {J. Algebr. Geom.},
 issn = {1056-3911},
 volume = {18},
 number = {1},
 pages = {1--36},
 year = {2009},
 language = {English},
 doi = {10.1090/S1056-3911-08-00485-2},
 zbMATH = {5480740},
 Zbl = {1161.14039}
}

@article{Per04,
  title = {Graded Rings and Equivariant Sheaves on Toric Varieties},
  author = {Perling, M.},
  year = {2004},
  pages = {~181–197},
  volume = {263/264},
  journal = {Mathematische Nachrichten}
}

@misc{Tipler25,
 author = {Carl Tipler},
 title = {Stable toric sheaves. {I} : {Chern} classes},
 year = {2025},
 howpublished = {Preprint, {arXiv}:2510.14651 [math.{AG}] (2025)},
 url = {https://arxiv.org/abs/2510.14651},
 arXiv = {arXiv:2510.14651}
}
\bibliographystyle{amsplain}

\end{document}